\definecolor{nicegreen}{RGB}{0,180,0}
\newtheorem{theo}{Theorem}[section]
\newtheorem{coro}[theo]{Corollary}
\newtheorem{lemm}[theo]{Lemma}
\newtheorem{prop}[theo]{Proposition}
\theoremstyle{remark}
\theoremstyle{plain}
\newtheorem*{thmI}{Theorem (Corollary \ref{scbij})}
\newtheorem*{defnI}{Definition \ref{corr}, Modified}
\theoremstyle{definition}
\newtheorem{defi}[theo]{Definition}
\newcommand{\qp}{\mathbb{Q}_p}
\newcommand{\qpp}{\mathbb{Q}_{p^2}}
\newcommand{\zp}{\mathbb{Z}_p}
\newcommand{\zpp}{\mathbb{Z}_{p^2}}
\newcommand{\fpb}{\ol{\mathbb{F}}_p}
\newcommand{\pp}{\mathfrak{p}}
\newcommand{\oo}{\mathfrak{o}}
\newcommand{\hh}{\mathcal{H}}
\newcommand{\T}{\textnormal{T}}
\newcommand{\s}{\textnormal{S}}
\newcommand{\bpi}{\boldsymbol{\pi}}
\newcommand{\gal}{\mathcal{G}}
\newcommand{\ii}{\mathcal{I}}
\newcommand{\cusp}[2]{(\omega^{#1}\circ\det)\otimes\bpi_{#2}}
\newcommand*{\longhookrightarrow}{\ensuremath{\lhook\joinrel\relbar\joinrel\rightarrow}}
\newcommand*{\longtwoheadrightarrow}{\ensuremath{\relbar\joinrel\twoheadrightarrow}}
\newcommand{\ol}[1]{\overline{#1}}
\newcommand{\gm}{\mathbf{G}_m}
\newcommand{\GG}{\mathfrak{G}}
\newcommand{\HH}{\mathfrak{H}}
\begin{document}
\nocite{*}

\title{A Classification of the Irreducible mod-$p$ Representations of $\textnormal{U}(1,1)(\mathbb{Q}_{p^2}/\mathbb{Q}_p)$}
\date{\today}
\author{Karol Kozio\l}
\address{Department of Mathematics, University of Toronto, Toronto, ON M5S 2E4  CANADA} \email{karol@math.toronto.edu}

\begin{abstract}
 Let $p$ be a prime number.  We classify all smooth irreducible mod-$p$ representations of the unramified unitary group $\textnormal{U}(1,1)(\mathbb{Q}_{p^2}/\mathbb{Q}_p)$ in two variables.  We then investigate Langlands parameters in characteristic $p$ associated to $\textnormal{U}(1,1)(\mathbb{Q}_{p^2}/\mathbb{Q}_p)$, and propose a correspondence between certain equivalence classes of Langlands parameters and certain isomorphism classes of semisimple $L$-packets on $\textnormal{U}(1,1)(\mathbb{Q}_{p^2}/\mathbb{Q}_p)$. 
 \end{abstract}

\maketitle
\tableofcontents

\section{Introduction}

Recently, the mod-$p$ representation theory of $p$-adic reductive groups has garnered a great deal of attention as a result of its roles in the mod-$p$ and $p$-adic Local Langlands Programs.  The expectation is that there exists a matching between (packets of) smooth mod-$p$ representations of a $p$-adic reductive group and certain Galois representations.  Representations of the group $\textnormal{GL}_2(\qp)$ have been widely studied and analyzed, and a (semisimple) mod-$p$ Local Langlands Correspondence has been established by Breuil (\cite{Br03}) based on the explicit determination of the irreducible mod-$p$ representations of $\textnormal{GL}_2(\qp)$.  Moreover, this correspondence is compatible with the $p$-adic Local Langlands Correspondence (cf. \cite{Br03b}; see also \cite{Br04}, \cite{Co10}, \cite{Em10}, \cite{Ki09}, \cite{Ki10}, \cite{Pas10}).

The smooth irreducible mod-$p$ representations of $\textnormal{SL}_2(\qp)$ have recently been classified by Abdellatif in \cite{Ab12}, by examining restrictions of the irreducible representations of $\textnormal{GL}_2(\qp)$.  This allowed her to take the first steps towards a mod-$p$ Local Langlands Correspondence for $\textnormal{SL}_2(\qp)$.  In addition, her results are the first to consider a mod-$p$ Local Langlands Correspondence with $L$-packets.

Several aspects of the mod-$p$ representation theory of unitary groups have already been considered by Abdellatif in \cite{Ab11}, and by the author and Xu in \cite{KX12}.  In the present article, we utilize the work of Breuil and Abdellatif to investigate the smooth irreducible mod-$p$ representations of the unitary group $\textnormal{U}(1,1)(\qpp/\qp)$, where $\qpp$ denotes the unramified quadratic extension of the field of $p$-adic numbers $\qp$.  The irreducible subquotients of representations of $\textnormal{U}(1,1)(\qpp/\qp)$ parabolically induced from characters have been classified by Abdellatif in \cite{Ab13}.  We shall be interested in representations which do not arise in this way, which we will refer to as \emph{supercuspidal} representations (we will comment on terminology at the end of this introduction).  These representations are the ones which are expected to play a central role in a potential Local Langlands Correspondence.

We begin our investigation in a more general context.  Denote by $F$ a nonarchimedean local field of residual characteristic $p$, and $E$ an unramified quadratic extension.  Let $G$ be the group $\textnormal{U}(1,1)(E/F)$, $G_\s = \textnormal{SU}(1,1)(E/F)$ its derived subgroup, and $I_\s(1)$ the unique pro-$p$ Sylow subgroup of the standard Iwahori subgroup of $G_\s$.  The \emph{pro-$p$-Iwahori-Hecke algebra} $\hh_{\fpb}(G_\s,I_\s(1))$ is the convolution algebra of compactly supported, $\fpb$-valued functions on the double coset space $I_\s(1)\backslash G_\s/I_\s(1)$.  As $G_\s\cong \textnormal{SL}_2(F)$, the structure and properties of $\hh_{\fpb}(G_\s,I_\s(1))$ are well-understood (cf. \cite{Ab11}, \cite{Vig05}).  In particular, a classification of finite-dimensional simple right modules for $\hh_{\fpb}(G_\s,I_\s(1))$ is known (see \cite{Ab11}).  We review the necessary results in Chapter 3.


The results of \cite{Ab13} provide a classification of smooth irreducible nonsupercuspidal representations of any connected quasisplit reductive group of relative rank 1.  We make the computations explicit in Section 4, and obtain an explicit description of all irreducible nonsupercuspidal representations of $G$ (Theorem \ref{nonscclass}).  We then investigate the behavior of irreducible representations upon restriction to the derived subgroup $G_\s$.

Next, we specialize to the case where $F = \qp$ and $E = \qpp$.  Under these assumptions, the smooth irreducible supercuspidal representations of $\textnormal{GL}_2(\qp)$ and $\textnormal{SL}_2(\qp)$ have been classified by Breuil and Abdellatif, respectively (cf. \cite{Br03}, \cite{Ab12}).  Using the algebra $\hh_{\fpb}(G_\s,I_\s(1))$ and a cohomological argument, we show in Section 5 that the supercuspidal representations of $\textnormal{SL}_2(\qp)\cong G_\s$ lift to smooth irreducible representations of $G$, which we denote 
$$\cusp{k}{r}$$
for $0\leq r \leq p - 1$ and $0\leq k < p + 1$ (see Definition \ref{defofsc}).  Moreover, we show that every smooth irreducible supercuspidal representation of $\textnormal{U}(1,1)(\qpp/\qp)$ is of this form (Theorem \ref{scclass}), and thereby obtain a classification of all smooth irreducible representations (Corollary \ref{u11class}).  To conclude, we arrange the irreducible representations into sets called \emph{$L$-packets}, and determine the $L$-packets on $\textnormal{U}(1,1)(\qpp/\qp)$ explicitly.

In the final section, we define the relevant Galois groups and $L$-groups attached to $G$.  Our definitions are adapted from the complex setting (see \cite{Ro90} for the classical definitions).  Thus, we are led to investigate Langlands parameters associated to the group $G$, that is, certain homomorphisms
$$\varphi:\textnormal{Gal}(\ol{\mathbb{Q}}_p/\qp)\longrightarrow {}^LG = \textnormal{GL}_2(\fpb)\rtimes\textnormal{Gal}(\ol{\mathbb{Q}}_p/\qp).$$
Our first (somewhat surprising) result in this direction is Proposition \ref{nostables}, which asserts that there do not exist any parameters $\varphi$ such that the Galois representation associated to $\varphi|_{\textnormal{Gal}(\ol{\mathbb{Q}}_p/\qpp)}$ is irreducible.

The aforementioned result suggests that the Langlands parameters associated to $G$ which are of interest are ``reducible'' in some sense.  We therefore consider homomorphisms whose image lies in the $L$-groups ${}^LJ$ and ${}^LT$, where $J$ is the unique \emph{elliptic endoscopic group} associated to $G$, and $T$ is the maximal torus of $G$.  We classify all such parameters in Propositions \ref{u1params} and \ref{paramsfromT}, and determine the possible equivalences among the parameters (Lemmas \ref{paramequiv}, \ref{paramequiv2}, and \ref{endvsnonend}).

In the complex setting, the parameters coming from ${}^LJ$ play a pivotal role in the representation theory of $G$ (cf. \cite{Ro90}).  In the characteristic $p$ setting, they remain of particular importance:
\begin{thmI}
Suppose $0\leq k,\ell < p + 1$ and $k\neq \ell$.  There exists a bijection between equivalence classes of Langlands parameters factoring through the group ${}^LJ$ and $L$-packets of irreducible supercuspidal representations of the group $G = \textnormal{U}(1,1)(\qpp/\qp)$, given by
$$\varphi_{k,\ell}\longleftrightarrow \{\cusp{\ell}{[k - \ell - 1]},~\cusp{k}{[\ell - k - 1]}\},$$
where $[k - \ell - 1]$ (resp. $[\ell - k - 1]$) denotes the unique integer between $0$ and $p - 1$ equivalent to $k - \ell - 1$ (resp. $\ell - k - 1$) modulo $p + 1$.  Moreover, this bijection is compatible with twisting by characters on both sides.  
\end{thmI}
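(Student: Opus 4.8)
The plan is to prove the bijection by first pinning down the combinatorial data on each side and then matching it. On the representation side, Theorem \ref{scclass} and Corollary \ref{u11class} furnish the complete list of supercuspidal representations $\cusp{k}{r}$ together with whatever isomorphisms hold among them, and the analysis of Section 5 partitions these into $L$-packets. I would begin by recording explicitly the rule that places two supercuspidal representations in a common $L$-packet. Since the packet structure is dictated by restriction to $G_\s \cong \textnormal{SU}(1,1)(\qpp/\qp) \cong \textnormal{SL}_2(\qp)$ and by the decomposition $\pi(r,0,\chi)|_{G_\s} \cong \pi_r \oplus \pi_{p-1-r}$, I expect each supercuspidal $L$-packet to be an unordered pair $\{\cusp{a}{s},\cusp{b}{t}\}$ whose two members share a restriction, and I would make the dependence of $(b,t)$ on $(a,s)$ completely explicit in terms of index arithmetic modulo $p+1$.

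On the Galois side, I would invoke Definition \ref{endparams} for the precise form of the parameters $\varphi_{k,\ell}$ and Lemmas \ref{paramequiv} and \ref{paramequiv2} for the equivalence relation among them. The crucial structural fact to extract is that $\varphi_{k,\ell}$ is equivalent to $\varphi_{\ell,k}$ and to no other $\varphi_{k',\ell'}$ with $\{k',\ell'\}\neq\{k,\ell\}$, so that the equivalence classes of parameters coming from ${}^LJ$ are indexed by \emph{unordered} pairs $\{k,\ell\}$ with $k\neq\ell$ and $0\leq k,\ell < p+1$. I would also isolate here the role of the hypothesis $k\neq\ell$: it is exactly the condition under which $[k-\ell-1]$ and $[\ell-k-1]$ lie in $\{0,\ldots,p-1\}$, so that both symbols $\cusp{\ell}{[k-\ell-1]}$ and $\cusp{k}{[\ell-k-1]}$ denote honest supercuspidal representations. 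Lemma \ref{endvsnonend} should confirm that the degenerate case $k=\ell$ corresponds to nonsupercuspidal parameters, and is therefore correctly excluded.

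With both sides parametrized, the map is checked to be a bijection by direct comparison. For well-definedness I must verify two things: that each target pair $\{\cusp{\ell}{[k-\ell-1]},\cusp{k}{[\ell-k-1]}\}$ genuinely constitutes one of the supercuspidal $L$-packets of Section 5, which follows from the packet rule of the first step together with the identity relating the two components; and that the symmetry of this pair under interchange matches $\varphi_{k,\ell}\sim\varphi_{\ell,k}$, so that the assignment descends to equivalence classes. For injectivity I would recover the unordered pair $\{k,\ell\}$ from the packet: reading off $\cusp{\ell}{[k-\ell-1]}$ yields $\ell$ directly and then $k\equiv \ell + [k-\ell-1] + 1 \pmod{p+1}$, so fibers are single equivalence classes. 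For surjectivity, given an arbitrary supercuspidal $L$-packet $\{\cusp{a}{s},\cusp{b}{t}\}$, I would set $\ell = a$ and $k \equiv a+s+1 \pmod{p+1}$ and verify, using the explicit packet rule, that the second member is forced to equal $\cusp{k}{[\ell-k-1]}$.

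Finally, for compatibility with twisting I would check that twisting $\cusp{k}{r}$ by $\omega^j\circ\det$ produces $\cusp{k+j}{r}$, with indices taken modulo $p+1$, and that the corresponding twist of $\varphi_{k,\ell}$ by the associated Galois character sends it to $\varphi_{k+j,\ell+j}$; since $[(k+j)-(\ell+j)-1] = [k-\ell-1]$, the two actions are intertwined by the bijection. The main obstacle I anticipate is not any single computation but the bookkeeping at the interface of the two classifications: one must verify that the equivalence relation from Lemmas \ref{paramequiv} and \ref{paramequiv2} matches \emph{exactly} the unordered-pair structure of the $L$-packets, with no hidden coincidences among the $\cusp{k}{r}$ collapsing distinct packets and no extra parameter equivalences identifying distinct pairs. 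Establishing this exact match, rather than merely a surjection of parameters onto packets, is the heart of the argument.
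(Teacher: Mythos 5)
Your proposal is correct and follows essentially the same route as the paper's proof of Corollary \ref{scbij}, which likewise combines the explicit packet rule of Proposition \ref{Lpackets} (every supercuspidal $L$-packet has the form $\{\cusp{k}{r},\,\cusp{k+r+1}{p-1-r}\}$) with Lemma \ref{paramequiv} to identify both sides with unordered pairs $\{k,\ell\}$, $k\neq\ell$; your extra checks (well-definedness, the arithmetic $[k-\ell-1]+[\ell-k-1]=p-1$, and twisting) are all consistent with this. One small correction to your motivating heuristic: the two members of a supercuspidal packet do \emph{not} share a restriction to $G_\s$ (they restrict to $\pi_r$ and $\pi_{p-1-r}$, which are nonisomorphic); they are related by conjugation under $\textnormal{GU}(1,1)(\qpp/\qp)$, but this does not affect your argument since the packet rule you actually invoke is exactly Proposition \ref{Lpackets}.
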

We refer to Definition \ref{endparams} for the precise description of the parameters $\varphi_{k,\ell}$.

Finally, we extend this bijection to include semisimple nonsupercuspidal representations (Definition \ref{corr}), using the parameters coming from ${}^LT$.  This allows us to make explicit a case of \emph{endoscopic transfer} from irreducible representations of $J$ to semisimple $L$-packets on $G$ (see the remarks following Definition \ref{corr}).  

\vspace{\baselineskip}

\noindent\textbf{Remark on Terminology.}  We briefly address our choice of nomenclature.  The notion of supersingularity was introduced by Barthel and Livn\'{e} (\cite{BL94} and \cite{BL95}) in their classification of smooth irreducible mod-$p$ representations of $\textrm{GL}_2(F)$.  For a general connected reductive group, a smooth irreducible admissible representation $\pi$ is called \emph{supersingular} if the Hecke eigenvalues of $\pi$ are ``as null as possible,'' while $\pi$ is called \emph{supercuspidal} if it is not a subquotient of a representation parabolically induced from a smooth irreducible admissible representation of a proper Levi subgroup.  Thanks to recent work of Abe--Henniart--Herzig--Vign\'{e}ras (\cite{AHHV}), we now know that these notions are equivalent; we will use them interchangeably.

\vspace{\baselineskip}

\noindent\textbf{Acknowledgements.}   I would like to thank my advisor Rachel Ollivier, for many enlightening discussions throughout the course of working on this article, as well as her encouragement and advice.  I would also like to thank Ramla Abdellatif, Florian Herzig, and Shaun Stevens for many helpful comments and discussions.  During the preparation of this article, support was provided by NSF Grant DMS-0739400.

\vspace{\baselineskip}

\section{Notation}\label{notation}
Fix a prime number $p$, and let $F$ be a nonarchimedean local field of residual characteristic $p$.  We assume throughout that the characteristic of $F$ is not equal to 2 (so that if $p = 2$, $F$ is a finite extension of $\mathbb{Q}_2$).  Denote by $\oo_F$ its ring of integers, and by $\pp_F$ the unique maximal ideal of $\oo_F$.  Fix a uniformizer $\varpi_F$ and let $k_F = \oo_F/\pp_F$ denote the residue field of size $q$, a power of $p$.  We fix also a separable closure $\overline{F}$ of $F$, and let $k_{\ol{F}}$ denote its residue field.  

Let $E$ denote the unique unramified extension of degree 2 in $\ol{F}$.  We denote by $\oo_E, \pp_E$, etc., the analogous objects for $E$.  Since $E$ is unramified, we may and do take $\varpi_E = \varpi_F =: \varpi$ as our uniformizer.  Let $\iota:k_{\ol{F}}\stackrel{\sim}{\longrightarrow} \fpb$ denote a fixed isomorphism, and assume that every $\fpb^\times$-valued character factors through $\iota$.  We identify $k_F$ and $k_E$ with $\mathbb{F}_q$ and $\mathbb{F}_{q^2}$, respectively, using the isomorphism $\iota$.  We will also identify $\mathbb{F}_{q^2}^\times$ with the image of the Teichm\"{u}ller lifting map $[\ \cdot\ ]:\mathbb{F}_{q^2}^\times\longrightarrow \oo_E^\times$ when convenient.  

We let $x\longmapsto\ol{x}$ denote the nontrivial Galois automorphism of $E$ fixing $F$.  This automorphism preserves $\oo_E$ and $\pp_E$, and induces the automorphism $x\longmapsto x^q$ on $\mathbb{F}_{q^2}$.  We take $\epsilon\in \oo_F^\times$ to be a fixed element for which $E = F(\sqrt{\epsilon})$, so that $\ol{\sqrt{\epsilon}} = -\sqrt{\epsilon}$.  We define $\textnormal{U}(1)(E/F)$ to be the kernel of the norm map
\begin{center}
 \begin{tabular}{rccl}
  $\textnormal{N}_{E/F}:$ & $E^\times$ & $\longrightarrow$ & $F^\times$\\
 & $x$ & $\longmapsto$ &  $x\ol{x}$.
 \end{tabular}
\end{center}
The norm map induces an isomorphism
$$\oo_E^\times/\oo_F^\times \textnormal{U}(1)(E/F)\stackrel{\sim}{\longrightarrow} \oo_F^\times/(\oo_F^\times)^2 \cong \begin{cases} \mathbb{Z}/2\mathbb{Z} & p\neq 2,\\ (\mathbb{Z}/2\mathbb{Z})^{\oplus [F:\mathbb{Q}_2] + 1} & p = 2.\end{cases}$$
This follows from, for example, Proposition II.5.7 of \cite{Neu92}.  When $p \neq 2$ (resp. $p = 2$), we denote by $\vartheta$ (resp. $\vartheta_1,\ldots,\vartheta_{[F:\mathbb{Q}_2] + 1}$) a fixed element (resp. fixed elements) in $\oo_E^\times$ whose image (resp. images) in $\oo_E^\times/\oo_F^\times\textnormal{U}(1)(E/F)$ is a generator (resp. gives a set of generators).  

Denote by $G$ the $F$-rational points of the algebraic group $\mathbf{U}(1,1)$, defined and quasisplit over $F$.  Explicitly, we take $G$ to have the form
$$G = \left\{g\in \textrm{GL}_2(E): g^*\begin{pmatrix} 0 & 1\\ 1 & 0\end{pmatrix}g = \begin{pmatrix} 0 & 1\\ 1 & 0\end{pmatrix}\right\},$$
where $g^* = \ol{g}^\top$ denotes the conjugate transpose of a matrix $g$ with coefficients in $E$.

Let $K$ denote the maximal compact subgroup of $G$ given by 
$$K := \textrm{GL}_2(\oo_E)\cap G,$$
(cf. \cite{Ro90}, Section 1.10) and let 
$$K_1 :=\begin{pmatrix}1 + \pp_E & \pp_E \\ \pp_E & 1 + \pp_E\end{pmatrix}\cap G$$
denote its pro-$p$ radical.  We define 
$$\Gamma := K/K_1 \cong \textnormal{U}(1,1)(\mathbb{F}_{q^2}/\mathbb{F}_{q}).$$
The Iwahori subgroup $I$ is defined as the preimage under the quotient map $K\longtwoheadrightarrow \Gamma$ of the Borel subgroup of upper triangular matrices in $\Gamma$.  We denote by $I(1)$ the pro-$p$ radical of $I$, which is the preimage of the upper triangular unipotent elements of $\Gamma$.  Explicitly, we have
$$I := \begin{pmatrix}\oo_E^\times & \oo_E \\ \pp_E & \oo_E^\times\end{pmatrix}\cap G,\qquad I(1):=\begin{pmatrix}1 + \pp_E & \oo_E \\ \pp_E & 1 + \pp_E \end{pmatrix}\cap G.$$

Let $B$ denote the Borel subgroup of upper triangular elements of $G$, $U$ its unipotent radical, and $U^-$ the opposite unipotent.  The subgroups $U$ and $U^-$ are both isomorphic to the additive group $F$.

We let $G_\textrm{S}$ denote the derived subgroup of $G$.  For any subgroup $J$ of $G$, we let $J_\s$ denote its intersection with $G_\s$.  We have $G_\textrm{S} = \textnormal{SU}(1,1)(E/F)\cong \textnormal{SL}_2(F)$, the latter isomorphism given by conjugation by the element $$\begin{pmatrix}\sqrt{\epsilon} & 0 \\ 0 & 1 \end{pmatrix}\in \textnormal{GL}_2(E).$$  We shall exploit this isomorphism to give a classification of the smooth irreducible representations of $\textnormal{U}(1,1)(\mathbb{Q}_{p^2}/\mathbb{Q}_p)$.

The maximal torus $T$ of $G$ consists of all elements of the form 
$$\begin{pmatrix}a & 0 \\ 0 & \ol{a}^{-1}\end{pmatrix},$$ 
with $a\in E^\times$.  Note that $T$ (or more precisely, the algebraic group defining $T$) is not split over $F$.  The maximal torus of $G_\s$ is $T_\s = T\cap G_\s$; it consists of all elements of the form 
$$\begin{pmatrix}a & 0 \\ 0 & a^{-1}\end{pmatrix},$$ 
with $a\in F^\times$, and is split over $F$.  The center $Z$ of $G$ is given by the subgroup of elements 
$$\begin{pmatrix}a & 0 \\ 0 & a\end{pmatrix},$$
with $a\in \textnormal{U}(1)(E/F)$.  We let 
$$T_0 := T\cap K\qquad\textnormal{and}\qquad T_1:= T\cap K_1.$$

Finally, we define the following distinguished elements of $\textnormal{GL}_2(E)$: 
$$\displaystyle{s:=\begin{pmatrix}0 &  1\\  1 & 0 \end{pmatrix}}, \qquad \displaystyle{\beta := \begin{pmatrix}0 & 1 \\ \varpi & 0 \end{pmatrix}},$$

$$\displaystyle{n_s:=\begin{pmatrix}0 & -\sqrt{\epsilon}^{-1}\\ \sqrt{\epsilon}& 0\end{pmatrix}},\qquad\displaystyle{n_{s'}:=\begin{pmatrix}0 & -\varpi^{-1}\sqrt{\epsilon}^{-1}\\\varpi\sqrt{\epsilon} & 0\end{pmatrix}},$$

$$\displaystyle{\theta := \begin{pmatrix}\vartheta & 0 \\ 0 & \ol{\vartheta}^{-1}\end{pmatrix}}~\textnormal{if}~p\neq 2,\qquad \displaystyle{\theta_i := \begin{pmatrix}\vartheta_i & 0 \\ 0 & \ol{\vartheta_i}^{-1}\end{pmatrix}}~\textnormal{if}~p = 2.$$

\section{Hecke Algebras}

In the course of determining the smooth irreducible representations of $\textnormal{U}(1,1)(\qpp/\qp)$, we shall make essential use of the pro-$p$-Iwahori-Hecke algebra of the derived subgroup $\textnormal{SU}(1,1)(\qpp/\qp)$.  We collect the relevant results here.  

\vspace{\baselineskip}
\subsection{Preliminaries}  We will be interested in the category $\mathfrak{Rep}_{\fpb}(G)$ of smooth representations of $G$ over $\fpb$.  We briefly recall some preliminary terminology.  Let $J$ be a closed subgroup of $G$, and let $(\sigma,V_\sigma)$ be a smooth $\fpb$-representation of $J$ (meaning that stabilizers are open).  We denote by $\textnormal{ind}_J^{G}(\sigma)$ the space of functions $f:G\longrightarrow V_\sigma$ such that $f(jg) = \sigma(j)f(g)$ for $j\in J, g\in G$, and such that the action of $G$ given by right translation is smooth (meaning that there exists some open subgroup $J'$, depending on $f$, such that $f(gj') = f(g)$ for every $j'\in J', g\in G$).  We let $\textnormal{c-ind}_J^{G}(\sigma)$ denote the subspace of $\textnormal{ind}_J^{G}(\sigma)$ spanned by functions whose support in $J\backslash G$ is compact.  These functors are called \emph{induction} and \emph{compact induction}, respectively.  We will mostly be concerned with the cases when $J$ is a compact open subgroup, or 
when $J$ is the group of $F$-rational points of a parabolic subgroup of $\mathbf{U}(1,1)$.  

\vspace{\baselineskip}
\subsection{Pro-$p$-Iwahori-Hecke Algebra}  Let $\pi$ be a smooth $\fpb$-representation of the group $G_\s = \textnormal{SU}(1,1)(E/F)$.  Frobenius Reciprocity for compact induction gives $$\pi^{I_\s(1)}\cong \textrm{Hom}_{I_\s(1)}(1,\pi|_{I_\s(1)})\cong \textrm{Hom}_{G_\s}(\textrm{c-ind}_{I_\s(1)}^{G_\s}(1),\pi),$$ where $1$ denotes the trivial character of $I_\s(1)$.  The \emph{pro-$p$-Iwahori-Hecke algebra} $$\hh_{\fpb}(G_\s,I_\s(1)) := \textnormal{End}_{G_\s}(\textnormal{c-ind}_{I_\s(1)}^{G_\s}(1))$$ is the algebra of ${G_\s}$-equivariant endomorphisms of the universal module $\textnormal{c-ind}_{I_\s(1)}^{G_\s}(1)$.  This algebra naturally acts on $\textrm{Hom}_{G_\s}(\textrm{c-ind}_{I_\s(1)}^{G_\s}(1),\pi)$ by pre-composition, which induces a right action on $\pi^{I_\s(1)}$.  In this way, we obtain the functor of $I_\s(1)$-invariants, $\pi\longmapsto\pi^{I_\s(1)}$, from the category of smooth $\fpb$-representations of ${G_\s}$ to the category of right $\hh_{\fpb}({G_\s},I_\s(1))$-modules.  

By adjunction, we have a natural identification 
$$\hh_{\fpb}({G_\s},I_\s(1)) \cong \textnormal{c-ind}_{I_\s(1)}^{G_\s}(1)^{I_\s(1)},$$ 
so we may view endomorphisms of $\textnormal{c-ind}_{I_\s(1)}^{G_\s}(1)$ as compactly supported functions on ${G_\s}$ which are $I_\s(1)$-biinvariant.  This leads to the following definition.

\begin{defi}
 Let $g\in {G_\s}$.  We let $\T_g\in \hh_{\fpb}({G_\s},I_\s(1))$ denote the endomorphism of $\textnormal{c-ind}_{I_\s(1)}^{G_\s}(1)$ corresponding by adjunction to the characteristic function of ${I_\s(1)gI_\s(1)}$; in particular, $\T_g$ maps the characteristic function of $I_\s(1)$ to the characteristic function of $I_\s(1)gI_\s(1)$.  
\end{defi}

Using the isomorphisms above, we see that if $\pi$ is a smooth $\fpb$- representation of ${G_\s}$, $v\in \pi^{I_\s(1)}$, and $g\in G_\s$, then 
\begin{equation}\label{act}
v\cdot \T_g = \sum_{u \in I_\s(1)\backslash I_\s(1)gI_\s(1)}u^{-1}.v = \sum_{u\in I_\s(1)/I_\s(1)\cap g^{-1}I_\s(1)g} ug^{-1}.v. 
\end{equation}
For more details, see \cite{Vig05}.  

\vspace{\baselineskip}
\subsection{Supersingular Modules}  Let $H_\s := T_{0,\s}/T_{1,\s} \cong \mathbb{F}_q^\times$.  For $h\in H_\s$, we let $\T_h$ denote the operator $\T_{t_0}$ for any preimage $t_0$ of $h$ in $T_{0,\s}$.  Since the group $G_\s$ is \emph{split}, we may apply results of \cite{Vig05} to give the structure of $\hh_{\fpb}(G_\s,I_\s(1))$ (see \cite{Vig14} for the case of a general reductive group).  

\begin{theo}\label{gens}
The operators $\T_{n_s}$, $\T_{n_{s'}}$ and $\T_h$, for $h\in H_\s$, generate $\hh_{\fpb}(G_\s, I_\s(1))$ as an algebra.  
\end{theo}

\begin{proof}
 The claim follows from (the remark following) Theorem 1 of \cite{Vig05}.  
\end{proof}

In the study of Hecke modules over fields of characteristic $p$, the notion of supersingularity plays a prominent role.  We recall the definition here.  

\begin{defi}[\cite{Vig05}, Definition 3]\label{defofssing}
 Let $M$ be a nonzero simple right $\hh_{\fpb}(G_\s,I_\s(1))$-module which admits a central character.  We say $M$ is \emph{supersingular} if every element of the center of $\hh_{\fpb}(G_\s,I_\s(1))$ which is of ``positive length'' acts by 0.  For the precise notion of ``positive length,'' see the discussion preceding Definition 2 (\emph{loc. cit.}).  
\end{defi}

The finite-dimensional simple right $\hh_{\fpb}(G_\s, I_\s(1))$-modules have been classified in Chapitre 6 of \cite{Ab11}.  The supersingular modules take on a particularly simple form:

\begin{prop}\label{ssingmods}
 The supersingular $\hh_{\fpb}(G_\s,I_\s(1))$-modules are \linebreak one-dimensional.  They are given by:
\begin{center}
  \begin{tabular}{ccrccccclcccl}
   $M_{0}$ & : & $\T_h$ & $\longmapsto$ & $1$, & & $\T_{n_s}$ & $\longmapsto$ & $\phantom{-}0$, & & $\T_{n_{s'}}$ & $\longmapsto$ & $-1$;\\
   $M_{q-1}$ & : & $\T_h$ & $\longmapsto$ & $1$, & & $\T_{n_s}$ & $\longmapsto$ & $-1$, & & $\T_{n_{s'}}$ & $\longmapsto$ & $\phantom{-}0$;\\
   $M_{r}$ & : & $\T_h$ & $\longmapsto$ & $a^{-r}$, & & $\T_{n_s}$ & $\longmapsto$ & $\phantom{-}0$, & & $\T_{n_{s'}}$ & $\longmapsto$ & $\phantom{-}0$,
  \end{tabular}
 \end{center}
where $0 < r < q-1,$ $a\in \mathbb{F}_q^\times$, and $h = \left(\begin{smallmatrix}a & 0 \\ 0 & a^{-1}\end{smallmatrix}\right)\in H_\s$.  
\end{prop}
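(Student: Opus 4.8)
The plan is to leverage the structure theorem (Theorem \ref{gens}) together with the classification of all finite-dimensional simple modules from \cite{Ab11}, and to isolate exactly which of those modules satisfy the supersingularity condition of Definition \ref{defofssing}. Since $\T_{n_s}$, $\T_{n_{s'}}$, and the idempotents $e_r$ generate the algebra, any simple module $M$ is determined by how these operators act. First I would use the orthogonal idempotent decomposition $\T_{\textnormal{id}} = \sum_{r=0}^{q-2} e_r$: on a simple module exactly one $e_r$ can act nontrivially (as the identity) since the $e_r$ are orthogonal idempotents summing to $1$, and a simple module is a quotient of the algebra through which the $e_r$ act as scalars $0$ or $1$. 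This pins down a single character $\chi_r$ of $H_\s$ associated to $M$, so I may write $M = M_r$ and record that $e_r$ acts by $1$ while all other $e_{r'}$ act by $0$.

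Next I would analyze the action of $\T_{n_s}$ and $\T_{n_{s'}}$ on the one-dimensional eigenspace, using the braid and quadratic relations in $\hh_{\fpb}(G_\s,I_\s(1))$ recorded in \cite{Vig05} and made explicit in \cite{Ab11}. The key point is that $\T_{n_s} e_r$ and $e_r \T_{n_s}$ are governed by whether $\chi_r$ is fixed by the relevant reflection, and the quadratic relation forces $\T_{n_s}$ to satisfy a polynomial of the form $\T_{n_s}^2 = -\T_{n_s}(\text{something involving } e_r)$ in characteristic $p$ (the Iwahori parameter $q$ vanishes mod $p$). On a one-dimensional module this polynomial relation restricts the possible eigenvalues of $\T_{n_s}$ and $\T_{n_{s'}}$ to a short finite list; the interaction between these operators and the chosen idempotent $e_r$ then distinguishes the three regimes $r = 0$, $r = q-1$, and $0 < r < q-1$, because the character $\chi_0$ (resp.\ $\chi_{q-1} = \chi_{-1}$) is precisely the one fixed by the reflection attached to $n_s$ (resp.\ $n_{s'}$).

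Finally I would impose the supersingularity condition. By Definition \ref{defofssing}, $M$ is supersingular precisely when every positive-length central element acts by zero; concretely, for $\textnormal{SL}_2$ the center of the pro-$p$-Iwahori-Hecke algebra is generated (away from the $e_r$) by the image of a length-generating element such as $\T_{\alpha}$-type operators, and supersingularity amounts to the vanishing of the associated central operator. Combining this vanishing with the eigenvalue constraints from the quadratic relation kills the ``nonsupersingular'' branches, and one checks directly that the surviving one-dimensional modules are exactly the three families listed, verifying in each case that $\T_{n_s}$ and $\T_{n_{s'}}$ take the stated values $0$ or $-1$. The main obstacle I anticipate is bookkeeping the precise quadratic relations and the correct identification of the central element of ``positive length'' from \cite{Vig05} — i.e.\ matching Vignéras' length function and central subalgebra to the explicit generators $\T_{n_s}, \T_{n_{s'}}, e_r$ here — since a sign or normalization error there would alter which eigenvalues survive; once that dictionary is fixed, reading off the three cases is routine, and indeed the whole statement can be cited essentially verbatim from Chapitre 6 of \cite{Ab11}, so the cleanest presentation is to reduce to that reference after recording the idempotent decomposition.
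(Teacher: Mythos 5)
Your proposal lands exactly where the paper's own ``proof'' begins and ends: the paper offers no argument for Proposition \ref{ssingmods} beyond citing the classification of simple finite-dimensional $\hh_{\fpb}(G_\s,I_\s(1))$-modules in Chapitre 6 of \cite{Ab11}, so recording the idempotent decomposition and then reducing to that reference is precisely the intended route, and your skeleton (one distinguished idempotent per simple module, quadratic relations with $q\equiv 0$ mod $p$, vanishing of the positive-length central elements) is the right one. One step of your sketch is genuinely off, though: $\chi_{q-1}$ equals $\chi_0$, the trivial character of $H_\s\cong\mathbb{F}_q^\times$ (not $\chi_{-1}=\chi_{q-2}$), so $e_{q-1}=e_0$, and both reflections act on $\widehat{H_\s}$ the same way, by $\chi_r\mapsto\chi_{-r}$. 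Consequently $M_0$ and $M_{q-1}$ are \emph{not} separated by which character is fixed by which reflection; they are two distinct characters of the algebra supported on the \emph{same} idempotent $e_0=e_{q-1}$, distinguished only by which of $\T_{n_s}$, $\T_{n_{s'}}$ takes the value $-1$ (the other two characters on that component, sending both operators to $0$ or both to $-1$, are the non-supersingular ones attached to the Steinberg and trivial representations). The correct mechanism for the third family is that for $0<r<q-1$ with $\chi_r\neq\chi_{-r}$ the operators $\T_{n_s},\T_{n_{s'}}$ intertwine $e_r$ with $e_{-r}$ and hence must act by $0$ on a one-dimensional module where $e_{-r}$ acts by $0$, while for $r=(q-1)/2$ the quadratic relation degenerates to $\T_{n_s}^2e_r=0$ and forces the same conclusion; with that correction your outline matches the cited classification.
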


In order to make use of the machinery of Hecke modules, we will need a precise relationship between smooth representations of $\textnormal{SU}(1,1)(E/F)$ and $\hh_{\fpb}(G_\s,I_\s(1))$-modules.  The following theorems provide us with the necessary link.

\begin{theo}[\cite{Ab11}, Corollaire 6.1.10 (i)]\label{bij1}
 The functor of $I_\s(1)$- invariants $\pi\longmapsto \pi^{I_\s(1)}$ induces a bijection between isomorphism classes of smooth, irreducible, nonsupercuspidal representations of $\textnormal{SU}(1,1)(E/F)$ and isomorphism classes of simple, finite-dimensional, nonsupersingular right $\hh_{\fpb}(G_\s,I_\s(1))$-modules.  
\end{theo}

\begin{theo}[\cite{Ab11}, Corollaire 6.1.10 (ii)]\label{bij2}
 The functor of $I_\s(1)$- invariants $\pi\longmapsto \pi^{I_\s(1)}$ induces a bijection between isomorphism classes of smooth, irreducible representations of $\textnormal{SU}(1,1)(\qpp/\qp)$ and isomorphism classes of simple, finite-dimensional right $\hh_{\fpb}(G_\s,I_\s(1))$-modules.  Under this bijection, the supercuspidal representation $\pi_r$ (of Definition \ref{defofsl2ssing}) corresponds to the supersingular module $M_r$ (of Proposition \ref{ssingmods}).  
\end{theo}

\begin{coro}\label{twistisom}
 Let $\pi$ be a smooth, irreducible supercuspidal representation of the group $\textnormal{SU}(1,1)(\qpp/\qp)$.  For $t\in T_0$, we let $\pi^t$ denote the representation with the same underlying space as $\pi$, with the action of $g\in\textnormal{SU}(1,1)(\qpp/\qp)$ given by first conjugating $g$ by $t$.  Then $\pi^t\cong \pi$.  
\end{coro}

\begin{proof}
 Note first that $t$ normalizes $I_\s(1)$, which implies we have $\pi^{I_\s(1)} = (\pi^t)^{I_\s(1)}$ as vector spaces.  Equation \eqref{act} (along with Proposition \ref{ssingmods}) shows that the actions of the operators $\T_{n_s}, \T_{n_{s'}}$ and $\T_h$ on these two spaces are the same.  By Theorem \ref{gens} we have $\pi^{I_\s(1)} \cong (\pi^t)^{I_\s(1)}$ as $\hh_{\fpb}(G_\s,I_\s(1))$-modules, and Theorem \ref{bij2} now implies that $\pi\cong \pi^t$ as $G_\s$-representations.  
\end{proof}

\vspace{\baselineskip}

\section{Nonsupercuspidal Representations}\label{secnonsc}
In \cite{Ab13}, Abdellatif has classified the smooth, irreducible, nonsupercuspidal representations of connected quasisplit reductive groups of relative rank 1 (or more accurately, the groups of $F$-rational points of these algebraic groups).  In particular, these results apply to the group $G = \textnormal{U}(1,1)(E/F)$.  We recall the results here.

\begin{theo}[\cite{Ab13}, Th\'eor\`eme 1.1]\label{ramla1}
 Let $\chi:T\longrightarrow \fpb^\times$ be a smooth character of $T$, which we inflate to a smooth character of $B$.  
\begin{enumerate}[(1)]
 \item As a $B$-module, the $\fpb$-representation $\textnormal{ind}_B^G(\chi)|_{B}$ is of length 2, with irreducible subquotients given by the character $\chi$ and the representation $V_\chi$, consisting of elements of $\textnormal{ind}_B^G(\chi)$ which take the value $0$ at the identity.
 \item The following are equivalent:
\begin{enumerate}[(a)]
  \item The $B$-module $\textnormal{ind}_B^G(\chi)|_B$ is semisimple;
  \item The $G$-module $\textnormal{ind}_B^G(\chi)$ is reducible;
  \item The $B$-character $\chi$ extends to a character of $G$.
\end{enumerate}
 \item If $\chi$ extends to a character of $G$, then the $G$-module $\textnormal{ind}_B^G(\chi)$ admits as subquotients the representation $\chi$ (as a subrepresentation) and the representation $\chi\otimes\textnormal{St}_G$ (as a quotient).  Here $$\textnormal{St}_G := \textnormal{ind}_B^G(1_B)/1_G$$ denotes the Steinberg representation of $G$, and $1_B$ and $1_G$ denote the trivial characters of $B$ and $G$, respectively.  The short exact sequence 
$$0 \longrightarrow \chi \longrightarrow \textnormal{ind}_B^G(\chi)\longrightarrow \chi\otimes\textnormal{St}_G\longrightarrow 0$$
does not split.  
\end{enumerate}
\end{theo}

This theorem shows that the irreducible nonsupercuspidal representations divide into three families.  The next result demonstrates the lack of isomorphisms between these representations.

\begin{theo}[\cite{Ab13}, Th\'eor\`eme 1.2, Section 3.3]\label{ramla2} 
There do not exist any isomorphisms between representations from distinct families.  If $\chi$ and $\chi'$ are two characters of $B$ which extend to $G$ (resp. do not extend to $G$) and there exists an isomorphism $\chi\otimes\textnormal{St}_G\cong\chi'\otimes\textnormal{St}_G$ (resp. $\textnormal{ind}_B^G(\chi)\cong \textnormal{ind}_B^G(\chi')$), then $\chi = \chi'$.  
\end{theo}

We make Theorem \ref{ramla1} explicit.  For any finite extension $L$ of $F$, we let $\omega$ denote the character of $L^\times$ whose value at a fixed uniformizer $\varpi_L$ is 1, and whose restriction to $\oo_L^\times$ is given by the composition 
\begin{equation}\label{omega}
\omega:\oo_L^\times \stackrel{\mathfrak{r}_L}{\longrightarrow} k_L^\times \stackrel{\iota}{\longrightarrow} \fpb^\times,
\end{equation}
where $\mathfrak{r}_L:\oo_L\longrightarrow k_L$ denotes the reduction modulo the maximal ideal.  For $\lambda\in \fpb^\times$, we denote by $\mu_\lambda:L^\times\longrightarrow \fpb^\times$ the unramified character taking the value $\lambda$ at $\varpi_L$.  In this notation, we have that any smooth character of $E^\times$ (resp. $F^\times$) is of the form $\mu_\lambda\omega^r$ for a unique $\lambda\in\fpb^\times$ and a unique $0\leq r < q^2 - 1$ (resp. $0\leq r < q - 1$).  Likewise, any smooth character of $\textnormal{U}(1)(E/F)$ is of the form $\omega^r$ for a unique $0\leq r < q+1$.

Since $T\cong E^\times$, we will identify the smooth characters of $T$ (and $B$) with those of $E^\times$, by the formula 
$$\mu_\lambda\omega^r\begin{pmatrix}a & b \\ 0 & \ol{a}^{-1}\end{pmatrix} := \mu_\lambda\omega^r(a).$$
Assume now that the character $\mu_\lambda\omega^r$ extends to a character of $G$.  This extension must therefore be trivial on the derived subgroup $G_\s$ and its maximal torus $T_\s$.  In particular, this implies 
\begin{center}
\begin{tabular}{ccccc}
$\lambda$ &  = & $\mu_\lambda\omega^r(\varpi)$ & = & $1$,\\
$a^r$ & = & $\mu_\lambda\omega^r([a])$ & = & $1$,
\end{tabular}
\end{center}
where $a\in \mathbb{F}_q^\times$ is arbitrary.  Hence, we see that if the character $\mu_\lambda\omega^r$ extends to $G$, we must have $\lambda = 1$ and $r = (q - 1)m$ for $0\leq m < q + 1$.  The converse statement is easily verified, and combining Theorems \ref{ramla1} and \ref{ramla2}, we obtain the following theorem.

\begin{theo}\label{nonscclass}
 Let $\pi$ be a smooth, irreducible, nonsupercuspidal representation of the group $\textnormal{U}(1,1)(E/F)$.  Then $\pi$ is isomorphic to one and only one of the following representations:
\begin{itemize}
 \item the smooth $\fpb$-characters $\omega^k\circ\det$, where $0\leq k < q + 1;$
 \item twists of the Steinberg representation $(\omega^k\circ\det)\otimes\textnormal{St}_G$, where $0\leq k < q + 1;$
 \item the principal series representations $\textnormal{ind}_B^G(\mu_\lambda\omega^r)$, where $\lambda\in\fpb^\times$ and $0\leq r < q^2 - 1$ with $(r, \lambda)\neq ((q - 1)m,1)$.
\end{itemize}
\end{theo}

In addition to this classification, we will also need to know how the irreducible representations of $\textnormal{U}(1,1)(E/F)$ behave upon restriction to the derived subgroup $\textnormal{SU}(1,1)(E/F)$.  We record some results in this direction.  

\begin{prop}\label{nonscres}  We have the following isomorphisms:
\begin{enumerate}[(a)]
 \item $\omega^k\circ\det|_{G_\s} \cong 1_{G_\s}$, where $1_{G_\s}$ denotes the trivial character of $G_\s$;
 \item $\textnormal{ind}_B^G(\mu_\lambda\omega^r)|_{G_\s} \cong \textnormal{ind}_{B_\s}^{G_\s}(\mu_\lambda\omega^{r'})$, where $r'$ denotes the unique integer such that $0\leq r' < q - 1$ and $r' \equiv r~ (\textnormal{mod}~ q - 1)$;
 \item $(\omega^k\circ\det)\otimes\textnormal{St}_G|_{G_\s} \cong \textnormal{St}_{G_\s}$, where $\textnormal{St}_{G_\s} := \textnormal{ind}_{B_\s}^{G_\s}(1_{B_\s})/1_{G_\s}$ is the Steinberg representation of $G_\s$.  
\end{enumerate}
\end{prop}

\begin{proof}
 Part (a) follows from the definition of $G_\s$.  For part (b), we note that (by Hilbert's Theorem 90) we have $G = BG_\s = G_\s B$, and use the Mackey decomposition (cf. \cite{Vig96} Chapitre 1, Section 5.5): 
$$\textnormal{ind}_B^G(\mu_\lambda\omega^r)|_{G_\s} = \textnormal{ind}_{B_\s}^{G_\s}(\mu_\lambda\omega^r|_{B_\s}) = \textnormal{ind}_{B_\s}^{G_\s}(\mu_\lambda\omega^{r'}),
$$
where $r'$ is as in the statement of the theorem.  Finally, since the restriction functor $\pi\longmapsto \pi|_{G_\s}$ is exact, we obtain $(\omega^k\circ\det)\otimes\textnormal{St}_{G}|_{G_\s} \cong \textnormal{St}_{G_\s}$ from the definitions of $\textnormal{St}_{G}$ and $\textnormal{St}_{G_\s}$.  This proves (c).  
\end{proof}

\begin{coro}\label{nonscres2}
 If $\pi$ is a smooth irreducible nonsupercuspidal representation of $\textnormal{U}(1,1)(E/F)$, then $\pi$ remains irreducible and nonsupercuspidal upon restriction to $\textnormal{SU}(1,1)(E/F)$.  
\end{coro}

\begin{proof}
 This follows from Theorem \ref{nonscclass} and Proposition \ref{nonscres} above, as well as Proposition 2.7 of \cite{Ab12}.  
\end{proof}

\begin{prop}\label{LLss}
 Let $\pi$ be a smooth irreducible representation of $G$.  Then $\pi$ admits a central character, and the restriction $\pi|_{G_\s}$ is semisimple of length at most $2$ (resp. at most $2^{[F:\mathbb{Q}_2] + 1}$) when $p\neq 2$ (resp. $p = 2$).
\end{prop}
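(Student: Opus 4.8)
The plan is to treat the two assertions in turn, extracting a central character first and then using it to reduce the restriction statement to an elementary instance of Clifford theory.

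For the central character, I would begin by recording the structure of the center. Since $Z\cong\textnormal{U}(1)(E/F)$ is the group of norm-one units of $\oo_E$, it is compact abelian; its unique pro-$p$ Sylow subgroup $Z(1)$ sits in an extension with cyclic quotient $Z/Z(1)$ of order $q+1$, which is prime to $p$ because $q=p^f$. The first step is to show that $Z(1)$ acts trivially on $\pi$: the invariant space $\pi^{Z(1)}$ is nonzero (a nonzero smooth vector is fixed by an open, hence finite-index, subgroup of the pro-$p$ group $Z(1)$, and a finite $p$-group has nonzero invariants in characteristic $p$), and it is $G$-stable because $Z(1)$ is central, so irreducibility forces $\pi^{Z(1)}=\pi$. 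The residual action of the finite group $Z/Z(1)$, whose order is prime to $p$, is then semisimple over the algebraically closed field $\fpb$; decomposing $\pi$ into $Z/Z(1)$-isotypic components yields $G$-stable summands, and irreducibility again collapses this to a single character. This produces the central character and, crucially, shows that $Z$ acts on $\pi$ by scalars.

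Next I would set up the restriction. Using $\det\colon G\longrightarrow\textnormal{U}(1)(E/F)$, whose kernel is $G_\s$, one computes $\det(ZG_\s)=\textnormal{U}(1)(E/F)^2$; since $p$ is odd the pro-$p$ part of $\textnormal{U}(1)(E/F)$ is uniquely $2$-divisible, while its cyclic prime-to-$p$ part has even order $q+1$, so $\textnormal{U}(1)(E/F)/\textnormal{U}(1)(E/F)^2\cong\mathbb{Z}/2\mathbb{Z}$ and hence $[G:ZG_\s]=2$. The element $\theta$ serves as an explicit coset representative: $\det\theta=\vartheta/\ol{\vartheta}$ is a nonsquare in $\textnormal{U}(1)(E/F)$ precisely because $\vartheta$ represents the nontrivial class in $\oo_E^\times/\oo_F^\times\textnormal{U}(1)(E/F)$, so $G=ZG_\s\sqcup\theta ZG_\s$ with $\theta^2\in ZG_\s$. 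Because $Z$ acts by scalars, a subspace of $\pi$ is $G_\s$-stable if and only if it is $ZG_\s$-stable; thus $\pi|_{G_\s}$ and $\pi|_{ZG_\s}$ have identical lattices of subrepresentations, and it suffices to analyze $\pi|_H$ for $H:=ZG_\s$.

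With $H\triangleleft G$ of index $2$ in hand, the restriction statement becomes elementary. If $\pi|_H$ is irreducible we are done. Otherwise choose a nonzero proper $H$-subrepresentation $W$. Since $\theta^2\in H$ and $H$ is normal, one checks that both $W+\theta W$ and $W\cap\theta W$ are stable under $H$ and under $\theta$, hence under all of $G$; by irreducibility of $\pi$ the former equals $\pi$ and the latter equals $0$, so $\pi=W\oplus\theta W$ as $H$-modules. A projection argument then shows $W$ (and likewise $\theta W$) admits no nonzero proper $H$-subrepresentation: any such $W_1$ would again satisfy $\pi=W_1\oplus\theta W_1$, forcing $W_1=W$ and contradicting $W_1\subsetneq W$. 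Hence $\pi|_H$ is semisimple of length at most $2$, and transporting this through the coincidence of lattices gives the same conclusion for $\pi|_{G_\s}$.

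The main subtlety to keep in mind is the absence of a freely available Schur's lemma over the countable field $\fpb$; I would sidestep it entirely by exploiting the compactness of $Z$ to build the central character by hand. Once that is secured, the only genuinely structural input is the equality $[G:ZG_\s]=2$, which is exactly what renders the smooth Clifford theory — delicate in general in characteristic $p$ — completely elementary in this situation.
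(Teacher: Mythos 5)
Your proof is correct, and the second half (the analysis of $\pi|_{G_\s}$) is essentially the paper's argument: both pass to the index-two normal subgroup $ZG_\s$ with $\theta$ representing the nontrivial coset, and both use the $G$-stability of $V_\tau+\theta.V_\tau$ and $V_\tau\cap\theta.V_\tau$ to force $\pi|_{ZG_\s}\cong\tau\oplus\tau^\theta$ with $\tau$ irreducible; you additionally write out the index computation via $\det$ and $\textnormal{U}(1)(E/F)/\textnormal{U}(1)(E/F)^2\cong\mathbb{Z}/2\mathbb{Z}$, which the paper asserts without proof, and your modular-law "projection argument" makes precise the paper's terse "the same argument shows that $\tau$ must be irreducible." Where you genuinely diverge is the central character: the paper takes a nonzero irreducible $\Gamma$-subrepresentation $\sigma$ of $\pi^{K_1}$, observes that the center of $K$ acts on $\sigma$ by a character, and uses the Frobenius reciprocity surjection $\textnormal{c-ind}_K^G(\sigma)\twoheadrightarrow\pi$ together with $Z\subset K$ to propagate that character to all of $\pi$; you instead work directly with the compact center, showing its pro-$p$ part acts trivially (nonzero $G$-stable invariants) and then decomposing $\pi$ into isotypic components for the prime-to-$p$ quotient of order $q+1$. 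Both routes deliberately avoid Schur's lemma over $\fpb$; yours is more self-contained and makes no reference to $K_1$ or to representations of the finite group $\Gamma$, while the paper's has the mild advantage of setting up the compact-induction formalism it reuses later. Either is a complete proof.
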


\begin{proof}
We proceed as in Lemma 2.4 of \cite{LL79}.  We first note that, since $K_1$ is a pro-$p$ group, the vector space $\pi^{K_1}$ is nonzero, and has an action of the group $K$.  This action factors through $\Gamma$, and we let $\sigma\subset \pi^{K_1}$ be an irreducible $\Gamma$-subrepresentation (such a representation always exists since $\Gamma$ is finite).  As $\sigma$ is irreducible, the center of $K$ acts on $\sigma$ by a character.  The injection 
$$\sigma\longhookrightarrow \pi|_K$$ 
gives, by Frobenius Reciprocity, a $G$-equivariant surjection $\textnormal{c-ind}_K^G(\sigma)\longtwoheadrightarrow\pi$.  Since $K$ contains the center $Z$, we conclude that $\pi$ admits a central character.

Suppose now that we have a locally profinite group $\GG$, a closed (normal) subgroup $\HH$ of index 2, and an irreducible representation $\pi$ of $\GG$.  Let $\boldsymbol{\theta}\in \GG$ be a representative of the nontrivial coset.  Consider the representation $\pi|_{\HH}$, and suppose it is reducible, with a nonzero proper subrepresentation $\tau$.  Let $V_\tau$ be the underlying vector space of $\tau$.  It then follows that the space $V_\tau + \boldsymbol{\theta}^{-1}.V_\tau$ is nonzero and stable by $\GG$, and therefore must be all of $\pi$.  Likewise, the space $V_\tau \cap \boldsymbol{\theta}^{-1}.V_\tau$ is stable by $\GG$, and hence must be zero.  Thus we see that 
$$\pi|_{\HH} \cong \tau \oplus \tau^{\boldsymbol{\theta}},$$
where $\tau^{\boldsymbol{\theta}}$ denotes the representation with the same underlying space as $\tau$, with the action given by first conjugating an element of $\HH$ by $\boldsymbol{\theta}$.  The same argument shows that $\tau$ must be irreducible as a representation of $\HH$.  

Applying this to the inclusion $ZG_\s \leq \langle ZG_\s, \theta \rangle = G$ when $p \neq 2$ (resp. the chain of inclusions
$$ZG_\s \leq \langle ZG_\s, \theta_1\rangle \leq \langle ZG_\s,\theta_1, \theta_2\rangle \leq \ldots \leq \langle ZG_\s, \theta_1, \ldots, \theta_{[F:\mathbb{Q}_2] + 1}\rangle = G$$
when $p = 2$) shows that the restriction to $ZG_\s$ is semisimple of length at most 2 when $p \neq 2$ (resp. at most $2^{[F:\mathbb{Q}_2] + 1}$ when $p = 2$).  Since $\pi$ admits a central character, the representations appearing in a direct sum decomposition will remain irreducible upon further restricting to $G_\s$.  
\end{proof}

\begin{prop}\label{nonsciffnonsc}
 Let $\pi$ be a smooth irreducible representation of $G$, and let $\tau \subset \pi|_{G_\s}$ be a nonzero irreducible $G_\s$-subrepresentation.  Then $\pi$ is supercuspidal if and only if $\tau$ is supercuspidal.  
\end{prop}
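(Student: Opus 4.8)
The plan is to prove the two implications separately, passing to the contrapositive in each case. One direction is already available: if $\pi$ is nonsupercuspidal, then by Corollary \ref{nonscres2} the restriction $\pi|_{G_\s}$ is irreducible and nonsupercuspidal, so the unique constituent $\tau = \pi|_{G_\s}$ is nonsupercuspidal. This is exactly the contrapositive of ``$\tau$ supercuspidal $\Rightarrow$ $\pi$ supercuspidal,'' so that implication requires no further work. It therefore remains to show that if $\tau$ is nonsupercuspidal, then $\pi$ is nonsupercuspidal.

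For this converse, the first step is to realize $\tau$ as the restriction of a nonsupercuspidal representation of $G$. Running through the classification of irreducible nonsupercuspidal representations of $G_\s\cong\textnormal{SL}_2(F)$ and comparing with Theorem \ref{nonscres}, I would exhibit an explicit irreducible nonsupercuspidal $\pi_0$ of $G$ with $\pi_0|_{G_\s}\cong\tau$: take $\pi_0 = 1_G$ if $\tau = 1_{G_\s}$ and $\pi_0 = \textnormal{St}_G$ if $\tau = \textnormal{St}_{G_\s}$ (parts (a) and (c)), and take $\pi_0 = \textnormal{ind}_B^G(\mu_\lambda\omega^{r'})$ if $\tau = \textnormal{ind}_{B_\s}^{G_\s}(\mu_\lambda\omega^{r'})$ is an irreducible principal series (part (b)). In the last case one checks that the constraints $0\leq r' < q-1$ and $(r',\lambda)\neq(0,1)$ force $(r',\lambda)\neq((q-1)m,1)$, so $\pi_0$ lands in the third family of Theorem \ref{nonscclass}. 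Independently, since $\tau\hookrightarrow\pi|_{G_\s}$, Frobenius reciprocity for compact induction yields a nonzero, hence surjective, map $\textnormal{c-ind}_{G_\s}^G(\tau)\twoheadrightarrow\pi$, so $\pi$ is a quotient of $\textnormal{c-ind}_{G_\s}^G(\tau)$; the same adjunction shows $\pi_0$ is a quotient of $\textnormal{c-ind}_{G_\s}^G(\tau)$ as well.

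The key step is then to compare $\pi$ and $\pi_0$. Because $\det$ identifies $G/G_\s$ with the compact abelian group $\textnormal{U}(1)(E/F)$, whose $\fpb^\times$-valued characters are exactly the $\omega^k$ for $0\leq k < q+1$, the projection formula (using that $\omega^k\circ\det$ is trivial on $G_\s$) shows that twisting by $\omega^k\circ\det$ leaves $\textnormal{c-ind}_{G_\s}^G(\tau)$ invariant. Combining the semisimplicity and length bound of Proposition \ref{LLss} with Clifford theory for the normal subgroup $G_\s$, I would argue that the irreducible quotients of $\textnormal{c-ind}_{G_\s}^G(\tau)$ form a single orbit under twisting by these characters; since $\pi$ and $\pi_0$ are both such quotients, this gives $\pi\cong\pi_0\otimes(\omega^k\circ\det)$ for some $k$. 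Finally, the projection formula $(\omega^k\circ\det)\otimes\textnormal{ind}_B^G(\chi)\cong\textnormal{ind}_B^G((\omega^k\circ\det|_B)\cdot\chi)$ shows that twisting by a character of $G/G_\s$ preserves nonsupercuspidality, so $\pi$ is nonsupercuspidal. I expect the main obstacle to be precisely the Clifford-theoretic comparison in this last paragraph: one must verify carefully, in the smooth mod-$p$ setting, that compactness of $G/G_\s$ together with the semisimplicity of the restrictions genuinely forces $\pi$ and $\pi_0$ to differ by a character of $G/G_\s$, rather than merely to share a common $G_\s$-constituent.
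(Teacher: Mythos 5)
Your first implication is fine and agrees with the paper. The converse is where the proposal runs into trouble, in two places. First, a technical misstep: $G_\s$ is \emph{not} an open subgroup of $G$ --- the determinant identifies $G/G_\s$ with the infinite profinite group $\textnormal{U}(1)(E/F)$ --- so $\textnormal{c-ind}_{G_\s}^{G}$ is not left adjoint to restriction, and the asserted surjection $\textnormal{c-ind}_{G_\s}^{G}(\tau)\twoheadrightarrow\pi$ does not follow from Frobenius reciprocity as stated. (One can instead use the right-adjoint version together with the semisimplicity of $\pi|_{G_\s}$ from Proposition \ref{LLss} to embed $\pi$ into the smooth induction $\textnormal{ind}_{G_\s}^{G}(\tau)$, but the argument must be rephrased.) Second, and more seriously, the pivotal claim --- that any irreducible $\pi$ whose restriction contains $\tau$ must be of the form $\pi_0\otimes(\omega^k\circ\det)$ --- is precisely the step you leave unverified, and it does not follow from Proposition \ref{LLss} alone. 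It is true, but proving it requires an actual Clifford-theoretic argument: one passes to the subgroup generated by $G_\s$ and the pro-$p$ part of the center (the analogue over general $F$ of the group $G_0$ of Lemma \ref{gzero}), observes that smooth $\fpb^\times$-characters of pro-$p$ groups are trivial so that irreducible smooth representations of this subgroup and of $G_\s$ coincide, and then applies Clifford theory to the resulting finite cyclic quotient of order $q+1$ prime to $p$, using $\textnormal{H}^2$ of that quotient with coefficients in $\fpb^\times$ vanishing. This is essentially the lifting argument the paper carries out in Section 5 for the supercuspidal case; without it, your proof is incomplete at its crux.

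For comparison, the paper's proof of this direction sidesteps both issues. It treats $\tau=1_{G_\s}$ separately (Proposition \ref{LLss} forces $\pi$ to be finite-dimensional, hence a character), and otherwise writes $\tau$ as a quotient of $\textnormal{ind}_{B_\s}^{G_\s}(\chi)$, extends $\chi$ to $ZB_\s$ using the central character of $\pi$, and then uses Mackey theory, Frobenius reciprocity, and transitivity of induction along $ZB_\s\subset B\subset G$ to exhibit $\pi$ directly as a quotient of $\textnormal{ind}_B^G(\chi'\oplus\chi'')$. This shows $\pi$ is nonsupercuspidal without ever identifying $\pi$ up to twist, and in particular needs neither the explicit matching of restrictions from Theorems \ref{nonscclass} and \ref{nonscres} nor any Clifford theory. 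If you want to keep your route, the missing lemma about extensions across $G/G_0$ must be stated and proved; otherwise the paper's induction-in-stages argument is the more economical path.
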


\begin{proof}
 Suppose that $\pi|_{G_\s}$ contains an irreducible nonsupercuspidal representation $\tau$.  If $\tau$ is the trivial character of $G_\s$, then Proposition \ref{LLss} implies $\pi$ is finite-dimensional.  This implies by smoothness and irreducibility that $\pi$ must itself be a character, and hence not supercuspidal.  We may therefore assume that $\tau$ is a quotient of a parabolically induced representation, that is to say, there exists $\chi:B_\s\longrightarrow \fpb^\times$ such that $\tau$ is a quotient of $\textnormal{ind}_{B_\s}^{G_\s}(\chi)$.  Let $\widetilde{\chi}:ZB_\s\longrightarrow \fpb^\times$ denote the character whose restriction to $B_\s$ is $\chi$, and whose restriction to $Z$ is the central character of $\pi$.  Mackey theory now implies that 
$$\textnormal{Hom}_{ZG_\s}(\textnormal{ind}_{ZB_\s}^{ZG_\s}(\widetilde{\chi}), \pi|_{ZG_\s})\neq 0.$$
Using Frobenius Reciprocity and transitivity of induction, we obtain
\begin{eqnarray*}
 \textnormal{Hom}_{ZG_\s}(\textnormal{ind}_{ZB_\s}^{ZG_\s}(\widetilde{\chi}), \pi|_{ZG_\s}) & \cong & \textnormal{Hom}_{G}(\textnormal{c-ind}_{ZG_\s}^G(\textnormal{ind}_{ZB_\s}^{ZG_\s}(\widetilde{\chi})), \pi)\\
 & \cong & \textnormal{Hom}_{G}(\textnormal{ind}_{ZG_\s}^G(\textnormal{ind}_{ZB_\s}^{ZG_\s}(\widetilde{\chi})), \pi)\\
 & \cong & \textnormal{Hom}_{G}(\textnormal{ind}_{ZB_\s}^G(\widetilde{\chi}), \pi)\\
 & \cong & \textnormal{Hom}_{G}(\textnormal{ind}_{B}^G(\textnormal{ind}_{ZB_\s}^{B}(\widetilde{\chi})), \pi).
\end{eqnarray*}

Since every irreducible finite-dimensional representation of $B$ is a character, we have that $\textnormal{ind}_{ZB_\s}^B(\widetilde{\chi})$ is of length $n$, where $n = 2$ if $p\neq 2$ and $n = 2^{[F:\mathbb{Q}_2] + 1}$ if $p = 2$.  Suppose
$$0 = M_0 \subsetneq M_1\subsetneq \ldots \subsetneq M_{n} = \textnormal{ind}_{ZB_\s}^B(\widetilde{\chi})$$
is a composition series for $\textnormal{ind}_{ZB_\s}^B(\widetilde{\chi})$, and set $\chi_i := M_i/M_{i - 1}$ for $i = 1,\ldots, n$.  We claim that there exists $\chi_i$ such that $\textnormal{Hom}_G(\textnormal{ind}_B^G(\chi_i),\pi)\neq 0$.  Assume the contrary.  Since parabolic induction is exact (\cite{Vig13}, Proposition 4.3), the exact sequence
$$0 \longrightarrow \chi_i \longrightarrow M_n/M_{i - 1} \longrightarrow M_n/M_i\longrightarrow 0$$
of $B$-representations yields an exact sequence
$$0 \longrightarrow \textnormal{ind}_B^G(\chi_i) \longrightarrow \textnormal{ind}_B^G(M_n/M_{i - 1}) \longrightarrow \textnormal{ind}_B^G(M_n/M_i)\longrightarrow 0$$
of $G$-representations for every $i = 1,\ldots, n$ (noting that $U$ acts trivially on $M_n$).  Applying $\textnormal{Hom}_G(-,\pi)$ gives an exact sequence
$$0 \longrightarrow \textnormal{Hom}_G(\textnormal{ind}_B^G(M_n/M_{i}),\pi)\longrightarrow \textnormal{Hom}_G(\textnormal{ind}_B^G(M_n/M_{i - 1}),\pi)\longrightarrow \textnormal{Hom}_G(\textnormal{ind}_B^G(\chi_i),\pi) = 0,$$
where the last equality holds by assumption.  This gives
\begin{eqnarray*}
\textnormal{Hom}_G(\textnormal{ind}_B^G(M_n),\pi) & \cong & \textnormal{Hom}_G(\textnormal{ind}_B^G(M_n/M_1),\pi)\\
 & \cong & \ldots \quad\cong \textnormal{Hom}_G(\textnormal{ind}_B^G(M_n/M_{n - 1}),\pi) = 0,
 \end{eqnarray*}
a contradiction.  From this we see that $\pi$ is a quotient of a representation parabolically induced from a character, and is therefore not supercuspidal.  

The reverse implication follows from Corollary \ref{nonscres2}.
\end{proof}

\vspace{\baselineskip}

\section{Supercuspidal Representations}

We suppose from this point onwards that $F = \qp, E = \qpp$, and $\varpi = p$.  We let $\zp$ and $\zpp$ denote the rings of integers of $\qp$ and $\qpp$, respectively.  

The supercuspidal representations of $\textnormal{GL}_2(\qp)$ and $\textnormal{SL}_2(\qp)$ have been classified by Breuil and Abdellatif, respectively (cf. \cite{Br03}, \cite{Ab12}).  We review their results here.

\vspace{\baselineskip}
\subsection{The Group $\textnormal{GL}_2(\qp)$}\label{gl2sc} Let $0\leq r \leq p - 1$ be an integer.  Denote by $\sigma_r = \textnormal{Sym}^r(\fpb^2)$ the $\fpb$-vector space of homogeneous polynomials in two variables of degree $r$, with an action of $\textnormal{GL}_2(\zp)$ given by
\begin{eqnarray*}
 \begin{pmatrix}a & b \\ c & d\end{pmatrix}.x^{r - i}y^i & = & (\mathfrak{r}_{\qp}(a)x + \mathfrak{r}_{\qp}(c)y)^{r - i}(\mathfrak{r}_{\qp}(b)x + \mathfrak{r}_{\qp}(d)y)^i,
\end{eqnarray*}
where $\mathfrak{r}_{\qp}:\zp\longrightarrow \mathbb{F}_p$ is the reduction map defined in Section \ref{secnonsc}.  We denote by $\qp^\times$ the center of $\textnormal{GL}_2(\qp)$, and extend the above action to $\qp^\times\textnormal{GL}_2(\zp)$ by letting $p\cdot \textnormal{id}$ act trivially.  Proposition 8 of \cite{BL94} shows that the algebra of $\textnormal{GL}_2(\qp)$-equivariant endomorphisms of the compactly induced representation $\textnormal{c-ind}_{\qp^\times\textnormal{GL}_2(\zp)}^{\textnormal{GL}_2(\qp)}(\sigma_r)$ is isomorphic to a polynomial algebra over $\fpb$ in one variable, generated by an endomorphism denoted $T_r$.  For a smooth character $\chi$ of $\qp^\times$, we denote by $\pi(r,0,\chi)$ the representation of $\textnormal{GL}_2(\qp)$ afforded by the cokernel of the map $T_r$, twisted by $\chi$~: 
$$\pi(r,0,\chi) := (\chi\circ\det)\otimes\frac{\textnormal{c-ind}_{\qp^\times\textnormal{GL}_2(\zp)}^{\textnormal{GL}_2(\qp)}(\sigma_r)}{(T_r)}~.$$

The necessary properties of the representations $\pi(r,0,\chi)$ are summarized in the following theorem, proved by Barthel--Livn\'{e} and Breuil.  

\begin{theo}\label{BLandBr}
 In the following, $r$ denotes an integer $0\leq r \leq p - 1$ and $\chi:\qp^\times\longrightarrow\fpb^\times$ a smooth character.
\begin{enumerate}[(a)]
 \item Every smooth irreducible supercuspidal representation of $\textnormal{GL}_2(\qp)$ is of the form $\pi(r,0,\chi)$.  
 \item The only isomorphisms among the representations $\pi(r,0,\chi)$ are the following~:
\begin{eqnarray*}
 \pi(r,0,\chi) & \cong & \pi(r,0,\chi\mu_{-1})\\
 \pi(r,0,\chi) & \cong & \pi(p - 1 - r,0,\chi\omega^r)\\
 \pi(r,0,\chi) & \cong & \pi(p - 1 - r,0,\chi\mu_{-1}\omega^r).
\end{eqnarray*}
 \item Let $Iw(1)$ denote the standard upper pro-$p$-Iwahori subgroup of $\textnormal{GL}_2(\qp)$.  We have $$\pi(r,0,1)^{Iw(1)} = \fpb\ol{[\textnormal{id}, x^r]} \oplus \fpb\ol{[\beta,x^r]},$$ where, for $g\in \textnormal{GL}_2(\qp)$ and $v\in\sigma_r$, $\ol{[g,v]}$ denotes the image in $\pi(r,0,1)$ of the element $[g,v]$ of $\textnormal{c-ind}_{\qp^\times\textnormal{GL}_2(\zp)}^{\textnormal{GL}_2(\qp)}(\sigma_r)$ with support $\qp^\times\textnormal{GL}_2(\zp)g^{-1}$ and value $v$ at $g$.
\end{enumerate}
\end{theo}

\begin{proof}
 This follows from Theorems 33, 34 and Corollary 36 of \cite{BL94}, and Th\'eor\`eme 3.2.4 and Corollaires 4.1.1, 4.1.4, and 4.1.5 of \cite{Br03}.  We remark that the hypothesis of having a central character made in \cite{BL94}, \cite{BL95}, and \cite{Br03} may be omitted by \cite{Be11}.
\end{proof}

\vspace{\baselineskip}
\subsection{The Group $\textnormal{SL}_2(\qp)$} Consider now the group $\textnormal{SL}_2(\qp)$.  Th\'eor\`eme 3.36 of \cite{Ab12} implies that for any smooth irreducible representation $\sigma$ of $\textnormal{SL}_2(\qp)$, there exists a smooth irreducible representation $\Sigma$ of $\textnormal{GL}_2(\qp)$ such that $\sigma$ is a Jordan--H\"older factor of $\Sigma$.  Moreover, Corollaires 3.38 and 3.41 (\emph{loc. cit.}) imply that in order to classify supercuspidal representations of $\textnormal{SL}_2(\qp)$, it suffices to compute the restriction of the representations $\pi(r,0,1)$.  Let $\pi_{r,\infty}$ denote the $\textnormal{SL}_2(\qp)$-subrepresentation of $\pi(r,0,1)|_{\textnormal{SL}_2(\qp)}$ generated by $\ol{[\textnormal{id}, x^r]}$, and let $\pi_{r,0}$ denote the $\textnormal{SL}_2(\qp)$-subrepresentation of $\pi(r,0,1)|_{\textnormal{SL}_2(\qp)}$ generated by $\ol{[\beta,x^r]}$.  

\begin{theo}\label{Ramla}
 In the following, $r$ denotes an integer $0\leq r \leq p - 1$.  
\begin{enumerate}[(a)]
 \item We have $\pi(r,0,1)|_{\textnormal{SL}_2(\qp)} \cong \pi_{r,\infty} \oplus \pi_{r,0}$.
 \item The representations $\pi_{r,0}$ and $\pi_{r,\infty}$ are smooth, irreducible, admissible, and supercuspidal.
 \item Conversely, any smooth, irreducible, and supercuspidal representation of $\textnormal{SL}_2(\qp)$ is isomorphic to one of the form $\pi_{r,0}$ or $\pi_{r,\infty}$.
 \item The only isomorphisms among the representations $\pi_{r,0}$ and $\pi_{r,\infty}$ are the following:
\begin{eqnarray*}
 \pi_{r,\infty} & \cong & \pi_{p - 1 - r,0}
\end{eqnarray*}
 \item Let $Iw_\s(1)$ denote the standard upper pro-$p$-Iwahori subgroup of $\textnormal{SL}_2(\qp)$.  We have
\begin{eqnarray*}
 \pi_{r,\infty}^{Iw_\s(1)} & = & \fpb\ol{[\textnormal{id}, x^r]},\\
 \pi_{r,0}^{Iw_\s(1)} & = & \fpb\ol{[\beta,x^r]}.
\end{eqnarray*}
\end{enumerate}
\end{theo}

\begin{proof}
 This follows from Propositions 4.5 and 4.7 and Corollaires 4.8 and 4.9 of \cite{Ab12}, and the comments following Corollaire 4.9.
\end{proof}

\begin{defi}\label{defofsl2ssing}
  We let $\pi_r$ denote the representation $\pi_{r,\infty}$.  In light of Theorem \ref{Ramla}, the representations $\pi_r$ with $0\leq r \leq p - 1$ are a full set of representatives for the isomorphism classes of supercuspidal representations of $\textnormal{SL}_2(\qp)$ (cf. \cite{Ab12}, Th\'eor\`eme 4.12).  
\end{defi}

We shall henceforth view the representations $\pi_r$ as representations of $\textnormal{SU}(1,1)(\qpp/\qp)$ via the isomorphism $\textnormal{SU}(1,1)(\qpp/\qp)\cong\textnormal{SL}_2(\qp)$.  

\vspace{\baselineskip}
\subsection{The Group $\textnormal{U}(1,1)(\qpp/\qp)$}  We now proceed to examine the supercuspidal representions of the group $\textnormal{U}(1,1)(\qpp/\qp)$.

\begin{defi}
Let $\textnormal{U}(1)_1 := \textnormal{U}(1)(\qpp/\qp)\cap 1 + p\zpp$.  We define $G_0$ to be the subgroup of $G$ generated by $G_\s$ and the central subgroup
$$\left\{\begin{pmatrix}a & 0 \\ 0 & a\end{pmatrix}: a\in \textnormal{U}(1)_1\right\}.$$
\end{defi}

The group $G_0$ fits into an exact sequence
\begin{equation}\label{gzeroseq}
1 \longrightarrow G_\s \longrightarrow G_0 \stackrel{\det}{\longrightarrow} \textnormal{U}(1)_1^2 \longrightarrow 1,
\end{equation}
and we have $G/G_0\cong \textnormal{U}(1)/\textnormal{U}(1)_1^2$.

Assume $p \neq 2$.  Proposition 6(b), Chapitre IV and Lemme 2, Chapitre V of \cite{Se68} imply that the map $x\longmapsto x^2$ is an automorphism of $\textnormal{U}(1)_1$, and therefore the map
$$x^2\longmapsto \begin{pmatrix}x & 0 \\ 0 & x\end{pmatrix}$$
gives a well-defined section to the determinant map in the short exact sequence \eqref{gzeroseq} above.  Thus, we obtain
$$G_0 \cong G_\s\times \textnormal{U}(1)_1^2 \cong G_\s\times \textnormal{U}(1)_1.$$
Since $\textnormal{U}(1)_1$ is a pro-$p$ group, Lemma 3 of \cite{BL94} implies that the set of smooth, irreducible mod-$p$ representations of $G_0$ and $G_\s$ are in canonical bijection.  

Assume now that $p = 2$.  The map $x\longmapsto x^2$ is no longer an automorphism of $\textnormal{U}(1)_1$ (note that $\ker(x\longmapsto x^2) = \{\pm 1\}\subset \textnormal{U}(1)_1$).  Pushing out the exact sequence \eqref{gzeroseq} by the map $G_\s\longrightarrow G_\s/\{\pm \textnormal{id}\}$ gives an exact sequence
$$1 \longrightarrow G_\s/\{\pm \textnormal{id}\} \longrightarrow G_0/\{\pm \textnormal{id}\} \stackrel{\det}{\longrightarrow} \textnormal{U}(1)_1^2 \longrightarrow 1.$$
The map sending $x^2$ to the class of $\left(\begin{smallmatrix}x & 0 \\ 0 & x\end{smallmatrix}\right)$ gives a well-defined section to the short exact sequence above, and therefore we obtain
$$G_0/\{\pm \textnormal{id}\}\cong G_\s/\{\pm \textnormal{id}\} \times \textnormal{U}(1)_1^2.$$
Once again, the set of smooth, irreducible mod-2 representations of $G_0/\{\pm \textnormal{id}\}$ and $G_\s/\{\pm \textnormal{id}\}$ are in canonical bijection.  The argument of Proposition \ref{LLss} shows that irreducible representations of $G_0$ and $G_\s$ admit central characters, which take the value 1 at $\pm \textnormal{id}$.  This shows that the smooth, irreducible mod-2 representations of $G_0$ and $G_\s$ are in canonical bijection.


We again assume $p$ is arbitrary.  Hilbert's Theorem 90 implies that $T_0G_0 = G$, so we may and do choose coset representatives $\{\delta_i\}_{i\in \textnormal{U}(1)/\textnormal{U}(1)_1^2}$ for $G/G_0$ such that $\delta_i\in T_0$.  Now let $0\leq r \leq p - 1$, and let $\pi_r$ be a smooth irreducible supercuspidal representation of $G_\s$, inflated to $G_0$.  For $\delta_i$ as above, Corollary \ref{twistisom} implies $\pi_r\cong \pi_r^{\delta_i}$ as $G_\s$-representations (and consequently as $G_0$-representations).  Therefore, we may lift $\pi_r$ to a projective representation of $G$.  Since
\begin{eqnarray*}
\textnormal{H}^2(G/G_0,\fpb^\times) & = & \textnormal{H}^2(\textnormal{U}(1)/\textnormal{U}(1)_1^2,\fpb^\times)\\
& = & \begin{cases}\textnormal{H}^2(\mathbb{Z}/(p + 1)\mathbb{Z},\fpb^\times) & \textnormal{if}~p\neq 2,\\ \textnormal{H}^2(\mathbb{Z}/3\mathbb{Z} \oplus (\mathbb{Z}/2\mathbb{Z})^{\oplus 2},\overline{\mathbb{F}}_2^\times) & \textnormal{if}~p = 2,\end{cases}\\
& = & 0,
\end{eqnarray*}
this representation lifts to a genuine representation $\widetilde{\pi}_r$ of $G$.  For more details, see \cite{JMY06}.

It remains to determine the action of $G$ on the lift $\widetilde{\pi}_r$ of $\pi_r$.  Consider the homomorphism $h_s:\qpp^\times \longrightarrow T$ defined by
$$h_s(a) = \begin{pmatrix}a & 0 \\0 & \ol{a}^{-1}\end{pmatrix}.$$  
Since $0\neq \widetilde{\pi}_r^{I(1)}\subset \widetilde{\pi}_r^{I_\s(1)}$, Theorem \ref{Ramla}(e) implies $\widetilde{\pi}_r^{I(1)} = \fpb v_r$, where $v_r = \ol{[\textnormal{id},x^r]}$.  As the elements $h_s([a])$ for $a\in \mathbb{F}_{p^2}^\times$ normalize $I(1)$, we have $$h_s([a]).v_r = a^mv_r,$$ for some $0 \leq m < p^2 - 1$.  Since $a^{p + 1}\in \mathbb{F}_p^\times$ for $a\in \mathbb{F}_{p^2}^\times$, we have $h_s([a]^{p + 1})\in T_{0,\s}$ and $$a^{(p + 1)r}v_r = h_s([a]^{p+1}).v_r = a^{(p + 1)m}v_r,$$ by the action of $I_\s$ on $\ol{[\textnormal{id},x^r]}\in\pi_r^{I_\s(1)}$.  Thus, we must have $m = r + (1 - p)k$ for some $k\in \mathbb{Z}$.  This leads to the following definition.

\begin{defi}\label{defofsc}
 Let $0\leq r \leq p - 1$ and $0\leq k < p+1$.  We define the representation $(\omega^k\circ\det)\otimes\bpi_r$ of $G = \textnormal{U}(1,1)(\qpp/\qp)$ by the following conditions:
\begin{itemize}
 \item $(\omega^k\circ\det)\otimes\bpi_r|_{G_\s} = \pi_r$;
 \item $((\omega^k\circ\det)\otimes\bpi_r)^{I(1)} = \pi_r^{I_\s(1)} = \fpb v_r$ as vector spaces;
 \item $h_s([a]).v_r = a^{r + (1 - p)k}v_r$ for $a\in \mathbb{F}_{p^2}^\times$.
\end{itemize}
\end{defi}

\vspace{\baselineskip}

The last point defines the character that gives the action of $I$ on $((\omega^k\circ\det)\otimes\bpi_r)^{I(1)}$, and the preceding discussion ensures that the vector spaces $(\omega^k\circ\det)\otimes\bpi_r$ are bona fide representations of $G$.  We collect their properties in the following proposition.

\begin{prop}\label{propsc}
 Let $0\leq r \leq p - 1$ and $0\leq k < p + 1$.  
\begin{enumerate}[(a)]
 \item The representations $(\omega^k\circ\det)\otimes\bpi_r$ are smooth, irreducible, admissible, and supercuspidal representations of $G$.
 \item The representations $(\omega^k\circ\det)\otimes\bpi_r$ are pairwise nonisomorphic.
\end{enumerate}
\end{prop}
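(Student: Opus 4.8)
The plan is to transport all four properties (smoothness, irreducibility, admissibility, supercuspidality) from the representation $\pi_r$ of $G_\s$ to its lift $(\omega^k\circ\det)\otimes\bpi_r$, using the results already established about restriction to the derived subgroup. First I would record that smoothness and admissibility are immediate: by construction $(\omega^k\circ\det)\otimes\bpi_r|_{G_\s} = \pi_r$, and since $G/G_0 \cong \textnormal{U}(1)(\mathbb{F}_{p^2}/\mathbb{F}_p)$ is finite, $G_0$ has finite index in $G$; as $\pi_r$ is smooth and admissible as a $G_\s$-representation (Theorem \ref{Ramla}(b)) and these properties are preserved under the finite-index lift, the lifted representation inherits them.

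For irreducibility, I would argue by contradiction. Suppose $(\omega^k\circ\det)\otimes\bpi_r$ has a nonzero proper $G$-subrepresentation $W$. Restricting to $G_\s$, the space $W$ would be a $G_\s$-subrepresentation of $\pi_r$, which is irreducible, so $W|_{G_\s}$ is either $0$ or all of $\pi_r$; since $W \neq 0$ and the underlying space of $(\omega^k\circ\det)\otimes\bpi_r$ equals that of $\pi_r$, we get $W = (\omega^k\circ\det)\otimes\bpi_r$ as vector spaces, contradicting properness. Hence the lift is irreducible over $G$. For supercuspidality, I would invoke Proposition \ref{nonsciffnonsc} directly: since the irreducible $G_\s$-subrepresentation $\pi_r$ of $(\omega^k\circ\det)\otimes\bpi_r|_{G_\s}$ is supercuspidal, the irreducible $G$-representation $(\omega^k\circ\det)\otimes\bpi_r$ is supercuspidal. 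This settles part (a).

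For part (b), pairwise non-isomorphism, I would use the two invariants that pin down the pair $(r,k)$. Suppose $(\omega^k\circ\det)\otimes\bpi_r \cong (\omega^{k'}\circ\det)\otimes\bpi_{r'}$ as $G$-representations. Restricting to $G_\s$ gives $\pi_r \cong \pi_{r'}$; since the $\pi_r$ for $0\leq r \leq p-1$ are pairwise non-isomorphic (Theorem \ref{Ramla}(c),(d), as $\pi_r = \pi_{r,\infty}$ and the only coincidences are $\pi_{r,\infty}\cong\pi_{p-1-r,0}$, which does not identify two $\pi_{r,\infty}$'s), we conclude $r = r'$. To separate the parameter $k$, I would use the action of $h_s([a])$ on the line $\pi_r^{I_\s(1)} = \fpb v_r$: by the third bullet of Definition \ref{defofsc}, $h_s([a])$ acts on $(\omega^k\circ\det)\otimes\bpi_r$ by the character $a \mapsto a^{r + (1-p)k}$. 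An isomorphism of $G$-representations respects the $I(1)$-invariants and the action of the normalizing element $h_s([a])$, so it forces $a^{r + (1-p)k} = a^{r' + (1-p)k'}$ for all $a \in \mathbb{F}_{p^2}^\times$, i.e.\ $(1-p)k \equiv (1-p)k' \pmod{p^2 - 1}$. Since $r = r'$ and $\gcd(1-p, p^2-1) = p-1$ gives $k \equiv k' \pmod{p+1}$, the constraint $0 \leq k, k' < p+1$ yields $k = k'$.

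The main obstacle I anticipate is the bookkeeping around the character exponent in part (b): one must verify that an abstract $G$-isomorphism genuinely intertwines the $h_s([a])$-action on the one-dimensional $I(1)$-invariant lines (not merely on the full spaces), so that the exponent $r + (1-p)k$ is a bona fide isomorphism invariant. This rests on the fact that $h_s([a])$ normalizes $I(1)$ and hence preserves the invariant line, together with the earlier computation (preceding Definition \ref{defofsc}) showing $\widetilde{\pi}_r^{I(1)} = \widetilde{\pi}_r^{I_\s(1)}$ is one-dimensional. Once this compatibility is secured, the modular arithmetic reducing $(1-p)k \equiv (1-p)k'$ to $k = k'$ is routine.
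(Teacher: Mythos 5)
Your proposal is correct and follows essentially the same route as the paper: irreducibility by restricting to $G_\s$ where the restriction is the irreducible $\pi_r$, supercuspidality via Proposition \ref{nonsciffnonsc}, and non-isomorphism by first using Theorem \ref{Ramla} to force $r=r'$ and then comparing the character $a\mapsto a^{r+(1-p)k}$ of $h_s([a])$ on the one-dimensional $I(1)$-invariant line to force $k\equiv k'\pmod{p+1}$. The only cosmetic difference is that the paper deduces admissibility directly from the one-dimensionality of the $I(1)$-invariants rather than from admissibility of $\pi_r$, and the compatibility worry you flag at the end is indeed harmless since $h_s([a])\in T_0$ normalizes $I(1)$, so any $G$-isomorphism preserves the invariant line together with its $T_0$-eigencharacter.
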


\begin{proof}
 (a) The claim about irreducibility follows from the fact that the restriction of the representation $(\omega^k\circ\det)\otimes\bpi_r$ to $G_\s$ is irreducible.  The space of $I(1)$-invariants is one-dimensional, and therefore $(\omega^k\circ\det)\otimes\bpi_r$ is admissible.  Proposition \ref{nonsciffnonsc} implies that the representations are supercuspidal.  

 (b) Suppose that $$\varphi:(\omega^k\circ\det)\otimes\bpi_r\longrightarrow (\omega^{k'}\circ\det)\otimes\bpi_{r'}$$ is a $G$-equivariant isomorphism.  The map $\varphi$ then defines a $G_\s$-equivariant isomorphism, so we must have $r = r'$ by Theorem \ref{Ramla}.  The last point of Definition \ref{defofsc} now shows that we must have $k \equiv k'~(\textnormal{mod}~ p + 1)$, which implies $k = k'$ (since $0\leq k,k' < p + 1$).  
\end{proof}

\begin{theo}\label{scclass}
 Let $\pi$ be a smooth irreducible supercuspidal representation of the group $G = \textnormal{U}(1,1)(\qpp/\qp)$.  Then $\pi$ is isomorphic to a unique representation of the form $(\omega^k\circ\det)\otimes\bpi_r$ with $0\leq r \leq p - 1$ and $0\leq k < p + 1$.  
\end{theo}

\begin{proof}
 Let $\pi$ be a smooth irreducible supercuspidal representation.  The proofs of Propositions \ref{LLss} and \ref{nonsciffnonsc} and Corollary \ref{twistisom} imply that $\pi|_{G_\s}$ must be of the form 
$$\pi|_{G_\s}\cong \bigoplus_{t\in \mathcal{J}}\pi_r^t \cong \pi_r^{\oplus |\mathcal{J}|},$$ 
for some $0\leq r \leq p - 1$, and $\mathcal{J}$ a finite subset of $T_0$.  

We claim that $|\mathcal{J}| = 1$.  To see this, note first that $\pi|_{G_\s}$ is an object of $\mathfrak{Rep}_{\fpb}^{I_\s(1)}(G_\s)$, the full subcategory of $\mathfrak{Rep}_{\fpb}(G_\s)$ consisting of representations generated by their $I_\s(1)$-invariants.  The functor of $I_\s(1)$-invariants restricted to this subcategory is faithful (this follows from \cite{BL94}, Lemma 3(1)), and we obtain an injection
$$\textnormal{End}_{G_\s}(\pi|_{G_\s})\longhookrightarrow \textnormal{End}_{\fpb}(\pi^{I_\s(1)}),$$
given by restricting endomorphisms to $\pi^{I_\s(1)}$.  Counting dimensions shows that this map is bijective.  

Now fix $t_0\in\mathcal{J}$ and a nonzero $v_0\in\pi_r^{I_\s(1)}$, the latter space indexed by $t_0$ in the direct sum above.  Let $v\in \pi^{I_\s(1)}$ be a nonzero eigenvector for $T_0$.  There exists an $\fpb$-linear automorphism of $\pi^{I_\s(1)}$ taking $v$ to $v_0$, and by the above remarks, we obtain a $G_\s$-equivariant automorphism $\varphi$ of $\pi|_{G_\s}$ taking $v$ to $v_0$.  Consider the subspace $\langle G.v \rangle_{\fpb} = \langle G_\s T_0.v\rangle_{\fpb} = \langle G_\s.v\rangle_{\fpb}$.  Since it is stable by $G$, it must be all of $\pi$.  On the other hand, the map $\varphi|_{\langle G_\s.v\rangle_{\fpb}}$ gives a $G_\s$-equivariant isomorphism
$$\pi|_{G_\s} = \langle G_\s.v\rangle_{\fpb}|_{G_\s}\stackrel{\sim}{\longrightarrow}\langle G_\s.v_0\rangle_{\fpb}|_{G_\s}\cong \pi_r.$$
This gives the claim.  

 We may therefore assume that $\pi|_{G_\s}\cong \pi_r$.  The discussion preceding Definition \ref{defofsc} then shows that there exists an integer $k$ such that $\pi\cong (\omega^k\circ\det)\otimes\bpi_r$.  
\end{proof}

\begin{coro}\label{u11class}
 Let $\pi$ be a smooth irreducible representation of $G = \textnormal{U}(1,1)(\qpp/\qp)$.  Then $\pi$ admits a central character and is admissible.  Moreover, $\pi$ is isomorphic to one and only one of the following representations:
\begin{itemize}
 \item the smooth $\fpb$-characters $\omega^k\circ\det$, where $0\leq k < p + 1;$
 \item twists of the Steinberg representation $(\omega^k\circ\det)\otimes\textnormal{St}_G$, where $0\leq k < p + 1;$
 \item the principal series representations $\textnormal{ind}_B^G(\mu_\lambda\omega^r)$, where $\lambda\in\fpb^\times$ and $0\leq r < p^2 - 1$ with $(r, \lambda)\neq ((p - 1)m, 1);$
\item the supercuspidal representations $(\omega^k\circ\det)\otimes\bpi_r$, where $0\leq r \leq p - 1$ and $0 \leq k < p + 1$.
\end{itemize}
\end{coro}

\begin{proof}
 If $\pi$ is not supercuspidal, then the result follows from Theorem \ref{nonscclass}, and if $\pi$ is supercuspidal it follows from Proposition \ref{propsc} and Theorem \ref{scclass}.  It only remains to prove that no supercuspidal representation is isomorphic to a nonsupercuspidal representation.  Assume this is the case; we then obtain a $G_\s$-equivariant isomorphism between a supercuspidal representation and a nonsupercuspidal representation, contradicting Corollaire 3.19 of \cite{Ab12}.  
\end{proof}

\vspace{\baselineskip}
\subsection{$L$-packets}
We define the \emph{general unitary group} $\textnormal{GU}(1,1)(\qpp/\qp)$ by $$\left\{g\in \textrm{GL}_2(\qpp): g^*\begin{pmatrix}0 & 1 \\ 1 & 0\end{pmatrix}g = \kappa \begin{pmatrix}0 & 1 \\ 1 & 0\end{pmatrix}~\textnormal{for some}~ \kappa\in\qp^\times\right\}.$$  The association $g\longmapsto \kappa$ is in fact a character, and induces a surjective homomorphism $\textnormal{sim}:\textnormal{GU}(1,1)(\qpp/\qp)\longrightarrow \qp^\times$.  We obtain a short exact sequence of groups 
$$1\longrightarrow \textnormal{U}(1,1)(\qpp/\qp)\longrightarrow \textnormal{GU}(1,1)(\qpp/\qp)\stackrel{\textnormal{sim}}{\longrightarrow}\qp^\times \longrightarrow 1$$ 
which splits, and we have 
\begin{equation}\label{gu(1,1)}
\textnormal{GU}(1,1)(\qpp/\qp) = \textnormal{U}(1,1)(\qpp/\qp)\rtimes\left\{\begin{pmatrix}1 & 0 \\ 0 & a\end{pmatrix}:~ a\in\qp^\times\right\}.
\end{equation}
As $G = \textnormal{U}(1,1)(\qpp/\qp)$ is normal in $\textnormal{GU}(1,1)(\qpp/\qp)$, the latter group acts on $G$ by conjugation, and consequently acts on representations of $G$.  The following definition is adapted from the complex case (see Section 11.1 of \cite{Ro90}).

\begin{defi}
An \emph{$L$-packet of semisimple representations on $G = \textnormal{U}(1,1)(\qpp/\qp)$} is a $\textnormal{GU}(1,1)(\qpp/\qp)$-orbit of smooth semisimple representations of $G$.  An $L$-packet is called \emph{supercuspidal} if it consists entirely of irreducible supercuspidal representations.
\end{defi}

\begin{prop}\label{Lpackets}
 Let $\Pi$ be an $L$-packet of smooth irreducible representations on $G = \textnormal{U}(1,1)(\qpp/\qp)$.  Then $\Pi$ has cardinality 1 if and only if it contains an irreducible nonsupercuspidal representation.  If $\Pi$ is a supercuspidal $L$-packet, then it is of the form $$\Pi = \{\cusp{k}{r},~ \cusp{k + r + 1}{p - 1 - r}\},$$ for some $0\leq r \leq p - 1,~ 0\leq k < p + 1$.  
\end{prop}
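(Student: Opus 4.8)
The plan is to analyze the conjugation action of $\textnormal{GU}(1,1)(\qpp/\qp)$ on $G$ and compute its effect on the irreducible representations classified in Corollary \ref{u11class}. By the decomposition \eqref{gu(1,1)}, it suffices to understand how the element $g_a := \begin{pmatrix}1 & 0\\ 0 & a\end{pmatrix}$ for $a\in\qp^\times$ acts, since the inner automorphisms coming from $G$ itself fix every isomorphism class of representation of $G$. Thus the $L$-packet of a representation $\pi$ is precisely the set of twists $\{\pi^{g_a}: a\in\qp^\times\}$, and the main task is to determine this orbit explicitly for each of the four families.

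First I would dispose of the nonsupercuspidal case. For each of the three nonsupercuspidal families (characters, twists of Steinberg, principal series), I would show the conjugation $\pi\mapsto \pi^{g_a}$ sends the representation back to an isomorphic one. The characters $\omega^k\circ\det$ are visibly fixed since they factor through the determinant, which is unaffected by the $G$-part and transforms predictably under $g_a$; I would check that the induced twist lands back in the same family with the same parameter. A clean way to organize this is to note that conjugation by $g_a$ is realized by conjugation inside $\textnormal{GU}(1,1)$, hence preserves the Borel $B$ and torus $T$, so it permutes principal series among themselves and Steinberg twists among themselves; combined with the rigidity statements of Theorem \ref{ramla2} (no isomorphisms across families, and parameters determined uniquely), the orbit must be a singleton. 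This establishes that $L$-packets of nonsupercuspidals have cardinality $1$, giving one direction of the biconditional.

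The substantive part is the supercuspidal case, which I expect to be the main obstacle. Here I would compute the effect of $g_a$ on $\cusp{k}{r}$ directly via its action on the $I(1)$-invariants and its restriction to $G_\s$. Conjugation by $g_a$ preserves $G_\s$ (since $G_\s$ is the derived subgroup, hence characteristic) and normalizes $I_\s(1)$, so $(\cusp{k}{r})^{g_a}|_{G_\s}$ is again supercuspidal with the same underlying Hecke module up to the action on the torus; the key point will be tracking how the parameter $r$ changes. I anticipate that conjugation by $g_a$ for $a$ a nonsquare unit (equivalently, by a representative like $\theta$ detecting the nontrivial class) implements the symmetry $\pi_r \leftrightarrow \pi_{p-1-r}$ on the $G_\s$-level, corresponding to $\pi_{r,\infty}\cong\pi_{p-1-r,0}$ in Theorem \ref{Ramla}(d), while simultaneously shifting the character parameter $k$. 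Using the explicit description in Definition \ref{defofsc} — in particular the eigenvalue $a^{r+(1-p)k}$ of $h_s([a])$ on $v_r$ — I would compute the new eigenvalue after conjugation and read off that $\cusp{k}{r}$ is carried to $\cusp{k+r+1}{p-1-r}$. The bookkeeping modulo $p+1$ (for $k$) and the interplay with the relation $m\equiv r+(1-p)k$ is where the calculation is delicate, so I would verify consistency by checking that applying $g_a$ twice returns to the original parameters $(k,r)$, confirming the orbit is $\{\cusp{k}{r},\cusp{k+r+1}{p-1-r}\}$.

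To finish, I would argue that this supercuspidal orbit has cardinality exactly $2$ (not $1$), completing the converse direction of the biconditional. This amounts to showing $\cusp{k}{r}\not\cong\cusp{k+r+1}{p-1-r}$ as $G$-representations, which follows from Proposition \ref{propsc}(b): these two representations are isomorphic only if $r=p-1-r$ and $k\equiv k+r+1\pmod{p+1}$ simultaneously, i.e. only if $2r=p-1$ and $r+1\equiv 0\pmod{p+1}$, and I would check these cannot both hold for $0\le r\le p-1$ (the second forces $r=p$, outside the range). Hence every supercuspidal $L$-packet genuinely has two elements, so a singleton $L$-packet forces a nonsupercuspidal member, and the stated form of the supercuspidal packet follows.
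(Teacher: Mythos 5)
Your overall strategy --- decompose $\textnormal{GU}(1,1)(\qpp/\qp)$ via \eqref{gu(1,1)}, compute the conjugation action of the diagonal elements $g_a$ family by family, and pin down the new parameter $k'$ by comparing eigenvalues of $h_s([a])$ on the one-dimensional space of $I(1)$-invariants --- is exactly the paper's. Your treatment of the nonsupercuspidal case (a diagonal element fixes $T$ pointwise, hence fixes the inducing character and each principal series; the determinant is unchanged, so characters and Steinberg twists are fixed) is if anything cleaner than the paper's, which routes through the pro-$p$-Iwahori--Hecke module and Theorem \ref{bij1}. Your explicit verification via Proposition \ref{propsc}(b) that the two members of a supercuspidal packet are distinct is a point the paper leaves implicit, and is correct.

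There is, however, a concrete error at the heart of the supercuspidal computation: you attribute the swap $\pi_r\leftrightarrow\pi_{p-1-r}$ to conjugation by $g_a$ with $a$ a \emph{nonsquare unit}, ``equivalently by a representative like $\theta$.'' Neither element does the job. The element $\theta$ lies in $T_0\subset G$, so conjugation by it is inner in $G$ and fixes every isomorphism class of $G$-representations; at the level of $G_\s$ the paper even shows $\pi_r^\theta\cong\pi_r$ in the proof of Theorem \ref{scclass}. More generally, for any unit $a\in\zp^\times$ the element $\textnormal{diag}(1,a)$ lies in $\textnormal{GL}_2(\zp)$ and stabilizes each summand $\pi_{r,\infty}$, $\pi_{r,0}$ of $\pi(r,0,1)|_{\textnormal{SL}_2(\qp)}$ up to isomorphism (the stabilizer under $\textnormal{GL}_2(\qp)$-conjugation is $\qp^\times\textnormal{GL}_2(\zp)\textnormal{SL}_2(\qp)$, the subgroup of elements whose determinant has even valuation), so $(\cusp{k}{r})^{g_a}\cong\cusp{k}{r}$ for every unit $a$ and your computation would return $r$ unchanged. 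The swap is implemented by the \emph{valuation} part of $\qp^\times$, namely $\beta s=\textnormal{diag}(1,p)$: Corollaire 4.6 of \cite{Ab12} gives $\pi_r^\beta\cong\pi_{p-1-r}$, and the eigenvalue comparison on $I(1)$-invariants then yields $k'\equiv k+r+1\ (\textnormal{mod}\ p+1)$. As written, your plan extracts only the singleton $\{\cusp{k}{r}\}$ from the unit directions and omits the one generator of $\textnormal{GU}(1,1)$ modulo $G\cdot\{\textnormal{units}\}$ that actually produces the second member of the packet; the repair is exactly the computation you describe, but applied to $\textnormal{diag}(1,p)$ rather than to a nonsquare unit.
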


\begin{proof}
We begin with the following general fact.  Let $f:G\longrightarrow G$ be a continuous automorphism.  Given an irreducible representation $\pi$, we let $\pi^f$ denote the representation with the same underlying vector space as $\pi$, and the action of $g\in G$ given by first applying $f$ to $g$.  One easily verifies that
$$\textnormal{ind}_B^G(\chi)^f \cong \textnormal{ind}_{f^{-1}(B)}^G(\chi\circ f).$$

 Now let 
$$t = \begin{pmatrix}1 & 0 \\ 0 & a \end{pmatrix}$$ 
with $a\in \qp^\times$, and let $f:G\longrightarrow G$ denote the automorphism $g\longmapsto tgt^{-1}$ given by conjugation by $t$.  Given an irreducible representation $\pi$, we denote $\pi^f$ by $\pi^t$.  

If $\pi = \omega^k\circ\det$ is a character of $G$, it is clear that $(\omega^k\circ\det)^t = \omega^k\circ\det$.  Likewise, if $\pi = \textnormal{ind}_B^G(\mu_\lambda\omega^r)$, then the above fact shows that
$$\pi^t = \textnormal{ind}_B^G(\mu_\lambda\omega^r)^t\cong \textnormal{ind}_{t^{-1}Bt}^G((\mu_\lambda\omega^r)^t) \cong \textnormal{ind}_B^G(\mu_\lambda\omega^r) = \pi.$$
Since the functor $\pi\longmapsto \pi^t$ is exact in the category $\mathfrak{Rep}_{\fpb}(G)$, we conclude that $((\omega^k\circ\det)\otimes\textnormal{St}_G)^t \cong (\omega^k\circ\det)\otimes\textnormal{St}_G$.  Hence, if $\Pi$ is an $L$-packet containing a nonsupersingular representation, then $\Pi$ has size 1.  

 Assume now that $\Pi$ contains a supersingular representation $\pi = \cusp{k}{r}$, and let 
$$t = \begin{pmatrix}1 & 0 \\ 0 & a\end{pmatrix}$$ 
with $a\in \zp^\times$.  Proceeding as in the proof of Corollary \ref{twistisom}, we see that $\pi^t|_{G_\s}\cong \pi|_{G_\s}$, and the action of $T$ on $\cusp{k}{r}$ shows that $\pi^t \cong \pi$.

 To conclude the proof, we must compute $\pi^{\beta s}$, where 
$$\beta s = \begin{pmatrix}1 & 0 \\ 0 & p\end{pmatrix};$$
since $s\in\textnormal{U}(1,1)(\qpp/\qp)$, we have $\pi^s\cong \pi$ and it suffices to determine $\pi^\beta$.  Corollaire 4.6 of \cite{Ab12} implies 
$$\pi^\beta|_{G_\s}\cong \pi_r^\beta \cong \pi_{p - 1 - r},$$ 
and Theorem \ref{scclass} gives $\pi^\beta\cong \cusp{k'}{p - 1 - r}$ for some $k'$.  As the element $\beta$ normalizes $I(1)$, we have 
$$\fpb v_r = \pi^{I(1)} = (\pi^\beta)^{I(1)}$$ 
as vector spaces.  For $a\in \mathbb{F}_{p^2}^\times$, the action of $h_s([a])$ on $(\pi^\beta)^{I(1)}$ is given by
$$h_s([a]).v_r = a^{-pr - pk + k}v_r,$$ 
while the action of $h_s([a])$ on $(\cusp{k'}{p - 1 - r})^{I(1)}$ is given by
$$h_s([a]).v_{p - 1 - r} = a^{p - 1 - r + (1 - p)k'}v_{p - 1 - r}.$$  
These actions coincide, and therefore $(1 - p)k' - (1 - p) - r \equiv (1 - p)k - pr~(\textnormal{mod}~ p^2 - 1)$, which shows $k' \equiv r + k + 1~(\textnormal{mod}~ p + 1)$.  Hence
$$\Pi = \{\cusp{k}{r},~ \cusp{k + r + 1}{p - 1 - r}\},$$ 
which concludes the proof.
\end{proof}

\vspace{\baselineskip}

\section{Galois Groups and Representations}

In this section we recall the definitions associated to Galois representations attached to unitary groups.  

\vspace{\baselineskip}
\subsection{Galois Groups}
Let $\gal_{\qp} := \textrm{Gal}(\ol{\mathbb{Q}}_p/\qp)$ denote the absolute Galois group of $\qp$, and $\ii_{\qp}$ the inertia subgroup.  Given a finite extension $F$ of $\qp$ contained in $\ol{\mathbb{Q}}_p$, we define $\gal_F := \textrm{Gal}(\ol{\mathbb{Q}}_p/F)$.  For $n\geq 1$, we let $\mathbb{Q}_{p^n}$ denote the unique unramified extension of $\qp$ of degree $n$ contained in $\ol{\mathbb{Q}}_p$, with canonical uniformizer $p$, and let 
\begin{equation}\label{rec}
\iota_n:\mathbb{Q}_{p^n}^\times\longrightarrow \gal_{\mathbb{Q}_{p^n}}^{ab}
\end{equation}
denote the reciprocity map of local class field theory, normalized so that uniformizers correspond to geometric Frobenius elements.  We shall denote by $\textnormal{Fr}_p$ a fixed element of $\gal_{\qp}$ whose image in $\gal_{\qp}^{ab}$ is equal to $\iota_1(p^{-1})$.  

Using the injections $\iota_n$, we will identify the smooth $\fpb$-characters of $\mathbb{Q}_{p^n}^\times$ and $\gal_{\mathbb{Q}_{p^n}}$ in the following way.  We fix a compatible system $\{\sqrt[p^n-1]{p}\}_{n\geq 1}$ of $(p^n-1)^{\textnormal{th}}$ roots of $p$, and let $\omega_n:\ii_{\qp}\longrightarrow \fpb^\times$ denote the character given by 
\begin{equation}\label{omegan}
\omega_n: h\longmapsto \iota\circ\mathfrak{r}_{\ol{\mathbb{Q}}_p}\left(\frac{h.\sqrt[p^n-1]{p}}{\sqrt[p^n-1]{p}}\right),
\end{equation}
where $h\in \ii_{\qp}$ and $\mathfrak{r}_{\ol{\mathbb{Q}}_p}:\ol{\mathbb{Z}}_p\longrightarrow k_{\ol{\mathbb{Q}}_p}$ denotes the reduction modulo the maximal ideal.  Lemma 2.5 of \cite{Br07} shows that the character $\omega_n$ extends to a character of $\gal_{\mathbb{Q}_{p^n}}$; we continue to denote by $\omega_n$ the extension which sends the element $\textnormal{Fr}_p^n$ to 1.  

\begin{lemm}\label{firstgal}
 For $n\geq 1$, we have $\omega_n\circ\iota_n = \omega$, where $\omega$ is the character defined in equation \eqref{omega}, which sends $p$ to 1.  
\end{lemm}
\begin{proof}
Denote by $\mathbb{Z}_{p^n}$ the ring of integers of $\mathbb{Q}_{p^n}$, and suppose $u\in\mathbb{Z}_{p^n}^\times$.  Propositions 6 and 8 of Chapitre XIV, \cite{Se68} imply
\begin{eqnarray*}
\frac{\iota_n(u).\sqrt[p^n - 1]{p}}{\sqrt[p^n - 1]{p}} & = & \frac{(u^{-1},*/\mathbb{Q}_{p^n}).\sqrt[p^n - 1]{p}}{\sqrt[p^n - 1]{p}}\\
 & = & (p,u^{-1})_{\nu_n}\\
 & = & \left[\mathfrak{r}_{\ol{\mathbb{Q}}_p}\left((-1)^{\nu_n(p)\nu_n(u^{-1})}\frac{p^{\nu_n(u^{-1})}}{u^{-\nu_n(p)}}\right)\right]\\
 & = & \left[\mathfrak{r}_{\ol{\mathbb{Q}}_p}(u)\right].
\end{eqnarray*}
Here, $(~\cdot~,*/\mathbb{Q}_{p^n}):\mathbb{Q}_{p^n}^\times\longrightarrow \gal_{\mathbb{Q}_{p^n}}^{ab}$ denotes the norm residue symbol of the field $\mathbb{Q}_{p^n}$ (Chapitre XIII, \emph{loc. cit.}), $\nu_n:\mathbb{Q}_{p^n}^\times\longrightarrow \mathbb{Z}$ is the normalized valuation on $\mathbb{Q}_{p^n}$, and $(~\cdot~,~ \cdot~)_{\nu_n}:\mathbb{Q}_{p^n}^\times\times\mathbb{Q}_{p^n}^\times\longrightarrow \boldsymbol{\mu}_{p^n - 1}(\mathbb{Q}_{p^n}^\times)$ denotes the Hilbert symbol (Chapitre XIV, \emph{loc. cit.}).  Applying $\iota\circ\mathfrak{r}_{\ol{\mathbb{Q}}_p}$ to both sides shows that $\omega_n\circ\iota_n(u) = \omega(u)$.

The functorial properties of the reciprocity maps $\iota_n$ imply that we have equalities
$$\omega_n\circ\iota_n(p^{-1}) = \omega_n\circ\textnormal{ver}\circ\iota_1(p^{-1}) = \omega_n(\textnormal{Fr}_p^n) = 1 = \omega(p^{-1}),$$
where $\textnormal{ver}:\gal_{\qp}^{ab}\longrightarrow\gal_{\mathbb{Q}_{p^n}}^{ab}$ denotes the transfer map.  The result now follows.  
\end{proof}

\begin{lemm}\label{brlemma}
For $h\in\ii_{\qp}$ and $n\geq 1$, we have $$\omega_n(\textnormal{Fr}_ph\textnormal{Fr}_p^{-1}) = \omega_n(h)^p.$$ 
\end{lemm}
\begin{proof}
 See the proof of Lemma 2.5 in \cite{Br07}.
\end{proof}

For $\lambda\in\fpb^\times$ and $n\geq 1$, we let $\mu_{n,\lambda}:\gal_{\mathbb{Q}_{p^n}}\longrightarrow \fpb^\times$ denote the unramified character which is trivial on $\ii_{\mathbb{Q}_{p}}$ and sends $\textnormal{Fr}_p^n$ to $\lambda$.  

\begin{coro}
 Let $n\geq 1$.  Every smooth $\fpb$-character of $\gal_{\mathbb{Q}_{p^n}}$ is of the form $\mu_{n,\lambda}\omega_n^{r}$, where $\lambda\in\fpb^\times$ and $0\leq r < p^n - 1$.  Moreover, the reciprocity maps $\iota_n$ induce a bijection between smooth $\fpb$-characters of $\gal_{\mathbb{Q}_{p^n}}$ and $\mathbb{Q}_{p^n}^\times$, given explicitly by $\mu_{n,\lambda}\omega_n^r\circ\iota_n = \mu_{\lambda^{-1}}\omega^r$.  
\end{coro}

\begin{proof}
 This follows from Lemmas \ref{firstgal} and \ref{brlemma}.  
\end{proof}

\vspace{\baselineskip}
\subsection{$L$-groups}  We now review the definition of the $L$-group of $G = \textnormal{U}(1,1)(\qpp/\qp)$.  For the general construction of $L$-groups, the reader should consult \cite{Bo79}; for the specific case of unitary groups, see Appendix A of \cite{BC09}, \cite{Mi11}, or Section 1.8 of \cite{Ro90}.  

Let $\widehat{G}$ denote the $\fpb$-valued points of the dual group of $G$; since $G$ splits over $\qpp$, we have $\widehat{G} = \textrm{GL}_2(\fpb)$.  We also define
$$\Phi_2 := \begin{pmatrix}\phantom{-}0 & 1\\ -1 & 0\end{pmatrix}\in\widehat{G}.$$

\begin{defi}
 The \emph{$L$-group of $G$} is defined as the semidirect product $${}^LG = \widehat{G}\rtimes\gal_{\qp} = \textnormal{GL}_2(\fpb)\rtimes\gal_{\qp},$$ with the action of $\gal_{\qp}$ on $\widehat{G}$ given by 
\begin{eqnarray*}
\textnormal{Fr}_pg\textnormal{Fr}_p^{-1} & = & \Phi_2(g^\top)^{-1}\Phi_2^{-1} = g\cdot\det(g)^{-1},\\
 hgh^{-1} & = & g,
\end{eqnarray*}
for $g\in \widehat{G}$, $h\in \gal_{\qpp}$.  
\end{defi}

In addition to the group $G$, we shall also need unitary groups of lower rank.  In particular, we will need the endoscopic group associated to $G$.  For the general definition in the complex case, see Sections 4.2 and 4.6 of \cite{Ro90}.

\begin{defi}
\textnormal{(a)} The group $J := (\textnormal{U}(1)\times\textnormal{U}(1))(\qpp/\qp)$ is the unique \emph{elliptic endoscopic group} associated to $G = \textnormal{U}(1,1)(\qpp/\qp)$.  

\noindent \textnormal{(b)} The \emph{$L$-group of $J$} is defined as the semidirect product $${}^LJ = (\fpb^\times\times\fpb^\times)\rtimes\gal_{\qp},$$ with the action of $\gal_{\qp}$ on $\fpb^\times\times\fpb^\times$ given by 
\begin{eqnarray*}
\textnormal{Fr}_p(x,y)\textnormal{Fr}_p^{-1} & = & (x^{-1},y^{-1}),\\
 h(x,y)h^{-1} & = & (x,y),
\end{eqnarray*}
for $x,y\in \fpb^\times$, $h\in \gal_{\qpp}$.  
\end{defi}

\begin{prop}[\cite{Ro90}, Proposition 4.6.1]\label{Lembedding}
 There exists a homomorphism $$\xi:{}^LJ\longhookrightarrow {}^LG,$$ which commutes with the projections to $\gal_{\qp}$, given by 
\begin{eqnarray*}
 (x,y) & \longmapsto & \begin{pmatrix}x & 0 \\ 0 & y\end{pmatrix},\\
 (1,1)\textnormal{Fr}_p & \longmapsto & \begin{pmatrix}0 & -1 \\ 1 & \phantom{-}0\end{pmatrix}\textnormal{Fr}_p,\\
 (1,1)h & \longmapsto & \begin{pmatrix}\mu_{2,-1}(h) & 0 \\ 0 & \mu_{2,-1}(h)\end{pmatrix}h,
\end{eqnarray*}
where $x,y\in\fpb^\times$, $h\in\gal_{\qpp}$.
\end{prop}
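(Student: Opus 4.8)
The plan is to assemble $\xi$ from its two evident pieces: the homomorphism $\widehat{J}=\fpb^\times\times\fpb^\times\to\widehat{G}$, $(x,y)\mapsto\textnormal{diag}(x,y)$, and a lift $\zeta\colon\gal_{\qp}\to{}^LG$ of the identity on $\gal_{\qp}$. Since ${}^LJ=\widehat{J}\rtimes\gal_{\qp}$ and the section $s\colon\gamma\mapsto(1,1)\rtimes\gamma$ is a group homomorphism splitting the projection ${}^LJ\to\gal_{\qp}$, every element factors as $((x,y)\rtimes 1)\,s(\gamma)$. Once $\zeta:=\xi\circ s$ is fixed I would therefore set $\xi((x,y)\rtimes\gamma)=\textnormal{diag}(x,y)\,\zeta(\gamma)$. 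A direct expansion shows that this prescription is multiplicative provided (A) $\zeta$ is a homomorphism commuting with the projection to $\gal_{\qp}$, and (B) for every $\gamma$ one has $\zeta(\gamma)\,\textnormal{diag}(x,y)\,\zeta(\gamma)^{-1}=\textnormal{diag}(\gamma\cdot(x,y))$, where $\gamma\cdot(x,y)$ is the ${}^LJ$-action on $\widehat J$. Commutation with the projections is then built in, and injectivity is immediate since the $\widehat G$-component restricted to $\widehat J$ is $(x,y)\mapsto\textnormal{diag}(x,y)$ while $\zeta$ lifts the identity; so the entire content lies in (A) and (B).

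Writing $\zeta(\gamma)=c(\gamma)\rtimes\gamma$ with $c(\gamma)\in\textnormal{GL}_2(\fpb)$, condition (A) is equivalent to $c$ being a continuous $1$-cocycle for the ${}^LG$-action of $\gal_{\qp}$ on $\textnormal{GL}_2(\fpb)$, that is, $c(\gamma_1\gamma_2)=c(\gamma_1)\cdot(\gamma_1\cdot c(\gamma_2))$. The stated formulas amount to $c(h)=\mu_{2,-1}(h)I$ for $h\in\gal_{\qpp}$ and $c(\textnormal{Fr}_p)=\Phi_2^{-1}$. Both Galois actions factor through $\textnormal{Gal}(\qpp/\qp)\cong\mathbb{Z}/2$, generated by the image of $\textnormal{Fr}_p$, and $\gal_{\qpp}$ is normal of index $2$ with $\gal_{\qp}=\gal_{\qpp}\sqcup\textnormal{Fr}_p\gal_{\qpp}$. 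I would record two facts about $\mu_{2,-1}$ that drive the bookkeeping: it is a quadratic character (its image is $\{\pm1\}$, so $\mu_{2,-1}=\mu_{2,-1}^{-1}$), and it is invariant under $\textnormal{Fr}_p$-conjugation, since it factors through the unramified quotient, on which conjugation by $\textnormal{Fr}_p$ fixes the Frobenius $\textnormal{Fr}_p^2$. Using these, the cocycle relation forces $c(\textnormal{Fr}_p h)=\mu_{2,-1}(h)\,\Phi_2^{-1}$ on the nontrivial coset, determining $c$ everywhere.

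It then remains to verify the cocycle identity for the four combinations of cosets in which $\gamma_1,\gamma_2$ can lie; normality of $\gal_{\qpp}$ together with the two facts above reduces each to a $2\times 2$ matrix identity. The three cases in which at least one factor lies in $\gal_{\qpp}$ are routine, using that $\gal_{\qpp}$ acts trivially on ${}^LG$ and that the ${}^LG$-Frobenius inverts scalar matrices. The decisive case is $\gamma_1,\gamma_2\in\textnormal{Fr}_p\gal_{\qpp}$: here the product lands in $\gal_{\qpp}$, so the left-hand side acquires the scalar $\mu_{2,-1}(\textnormal{Fr}_p^2)=-1$, while on the right-hand side one computes $c(\textnormal{Fr}_p)\cdot(\textnormal{Fr}_p\cdot c(\textnormal{Fr}_p))=\Phi_2^{-1}\cdot\Phi_2^{-1}=\Phi_2^{-2}=-I$, using that the ${}^LG$-Frobenius fixes $\Phi_2^{-1}$. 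Equivalently, the whole matter reduces to the single relation $\zeta(\textnormal{Fr}_p)^2=\zeta(\textnormal{Fr}_p^2)$, which holds if and only if $\Phi_2^{-2}=\mu_{2,-1}(\textnormal{Fr}_p^2)\,I$. I expect this to be the main (indeed essentially the only) subtle point: the value $-1$ in the character $\mu_{2,-1}$ is not a free choice but is dictated by $\Phi_2^2=-I$, i.e. by the non-split nature of the pinned involution.

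Finally I would check (B) separately for $\gamma\in\gal_{\qpp}$ and for $\gamma=\textnormal{Fr}_p$. For $h\in\gal_{\qpp}$ the matrix part $c(h)=\mu_{2,-1}(h)I$ is central, so conjugation is trivial, matching the trivial ${}^LJ$-action. For $\gamma=\textnormal{Fr}_p$ the ${}^LG$-Frobenius sends $\textnormal{diag}(x,y)$ to $\textnormal{diag}(y^{-1},x^{-1})$ (it transposes and inverts, hence swaps the diagonal entries and inverts them), and subsequent conjugation by $\Phi_2^{-1}$ swaps the two entries back, yielding $\textnormal{diag}(x^{-1},y^{-1})=\textnormal{diag}(\textnormal{Fr}_p\cdot(x,y))$, exactly the ${}^LJ$-inversion. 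Thus the coordinate swap built into the ${}^LG$-action is precisely undone by the off-diagonal matrix $\Phi_2^{-1}$, which is the structural reason the $L$-embedding exists. With (A) and (B) established, $\xi$ is a homomorphism of the stated form, and it is injective and compatible with the projections to $\gal_{\qp}$, as required.
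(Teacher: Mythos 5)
Your verification is correct. Note, however, that the paper offers no proof of this proposition at all: it is stated as a quotation of Proposition 4.6.1 of Rogawski's book (adapted to the mod-$p$ dual group), so there is nothing in the text to compare your argument against line by line. Your direct check supplies exactly what that citation leaves implicit, and it is organized the right way: reduce everything to (A) the $1$-cocycle identity for $c(h)=\mu_{2,-1}(h)\,\mathrm{id}$, $c(\textnormal{Fr}_p)=\Phi_2^{-1}$, and (B) the intertwining of the two Galois actions on the diagonal torus. I confirmed the computations: $\mu_{2,-1}$ is quadratic and $\textnormal{Fr}_p$-conjugation invariant because it is unramified and $\ii_{\qpp}=\ii_{\qp}$ is normal in $\gal_{\qp}$; the ${}^LG$-Frobenius $g\mapsto\Phi_2(g^{\top})^{-1}\Phi_2^{-1}$ fixes $\Phi_2^{-1}$ and inverts scalars, so the only nontrivial coset computation is $c(\textnormal{Fr}_p)\cdot(\textnormal{Fr}_p\cdot c(\textnormal{Fr}_p))=\Phi_2^{-2}=-\mathrm{id}=\mu_{2,-1}(\textnormal{Fr}_p^2)\,\mathrm{id}$, which you correctly isolate as the reason the twist must be by $\mu_{2,-1}$ rather than the trivial character; and for (B), $\textnormal{Fr}_p\cdot\mathrm{diag}(x,y)=\mathrm{diag}(y^{-1},x^{-1})$ followed by conjugation by $\Phi_2^{-1}$ gives $\mathrm{diag}(x^{-1},y^{-1})$, matching the ${}^LJ$-action. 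Two cosmetic remarks only: condition (B) should be observed to propagate from the generators $\gal_{\qpp}$ and $\textnormal{Fr}_p$ to all of $\gal_{\qp}$ because the set of $\gamma$ satisfying it is a subgroup once (A) holds (you use this implicitly); and your parenthetical claim that everything reduces to $\zeta(\textnormal{Fr}_p)^2=\zeta(\textnormal{Fr}_p^2)$ is a slight overstatement, since the mixed-coset cases also invoke the quadraticity and Frobenius-invariance of $\mu_{2,-1}$ — but you verify those cases anyway, so nothing is missing.
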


\vspace{\baselineskip}
\subsection{Langlands Parameters for $(\textnormal{U}(1)\times\textnormal{U}(1))(\qpp/\qp)$}  We begin by defining and investigating Langlands parameters in characteristic $p$ associated to $(\textnormal{U}(1)\times\textnormal{U}(1))(\qpp/\qp)$.  

\begin{defi}
A \emph{Langlands parameter} is a homomorphism 
$$\varphi: \gal_{\qp}\longrightarrow {}^LJ = (\fpb^\times\times\fpb^\times)\rtimes \gal_{\qp},$$
such that the composition of $\varphi$ with the canonical projection ${}^LJ\longrightarrow \gal_{\qp}$ is the identity map of $\gal_{\qp}$.  We say two Langlands parameters are \emph{equivalent} if they are conjugate by an element of $\fpb^\times\times\fpb^\times$.  
\end{defi}

With this definition, we come to our first result.  

\begin{prop}\label{u1params}
 Let $\varphi:\gal_{\qp}\longrightarrow {}^LJ$ be a Langlands parameter.  Then there exist $0\leq k,\ell < p + 1$ such that $\varphi$ is equivalent to the Langlands parameter $\eta_{k,\ell}$, defined by
\begin{center}
\begin{tabular}{ccc}
 $\eta_{k,\ell}(\textnormal{Fr}_p)$ & $=$ & $(1,1)\textnormal{Fr}_p$\\
 $\eta_{k,\ell}(h)$ & $=$ & $(\omega_2^{(1 - p)k}(h),\omega_2^{(1 - p)\ell}(h))h$,
\end{tabular}
\end{center}
where $h\in \gal_{\qpp}$.  
\end{prop}

\begin{proof}
 The conjugation action of $\gal_{\qp}$ on $\fpb^\times\times\fpb^\times$ shows that, up to equivalence, we have $\varphi(\textnormal{Fr}_p) = (1,1)\textnormal{Fr}_p$.  It remains to determine the image of $\gal_{\qpp}$.  Since $\gal_{\qpp}$ acts trivially on $\fpb^\times\times\fpb^\times$, we see that the restriction of $\varphi$ to $\gal_{\qpp}$ must be of the form
$$\varphi(h)  = (\mu_{2,\lambda_1}\omega_2^{r_1}(h),\mu_{2,\lambda_2}\omega_2^{r_2}(h))h,$$
where $\lambda_1,\lambda_2\in \fpb^\times$, $0\leq r_1,r_2 < p^2 - 1$, and $h\in\gal_{\qpp}$.  Lemma \ref{brlemma} and the definition of ${}^LJ$ imply 
\begin{eqnarray*}
 (\lambda_1,\lambda_2)\textnormal{Fr}_p^2 & = & \varphi(\textnormal{Fr}_p^2)\\
 & = & \varphi(\textnormal{Fr}_p)^2\\
 & = & (1,1)\textnormal{Fr}_p^2,\\
 (\omega_2^{pr_1}(h), \omega_2^{pr_2}(h))\textnormal{Fr}_ph\textnormal{Fr}_p^{-1} & = & \varphi(\textnormal{Fr}_ph\textnormal{Fr}_p^{-1})\\
 & = & \varphi(\textnormal{Fr}_p)\varphi(h)\varphi(\textnormal{Fr}_p)^{-1}\\
 & = & \textnormal{Fr}_p(\omega_2^{r_1}(h), \omega_2^{r_2}(h))h\textnormal{Fr}_p^{-1}\\
 & = & (\omega_2^{-r_1}(h), \omega_2^{-r_2}(h))\textnormal{Fr}_ph\textnormal{Fr}_p^{-1},
\end{eqnarray*}
which gives the result.  
\end{proof}

\begin{coro}\label{u1llc}
 There is a bijection between the Langlands parameters associated to the group $J$ and smooth irreducible representations of $(\textnormal{U}(1)\times\textnormal{U}(1))(\qpp/\qp)$, given explicitly by 
$$\eta_{k,\ell} \longleftrightarrow \omega^k\otimes\omega^\ell,$$
where $0\leq k,\ell < p + 1$, and $\omega$ is the character defined in equation \eqref{omega}.
\end{coro}

\vspace{\baselineskip}
\subsection{Langlands Parameters for $\textnormal{U}(1,1)(\qpp/\qp)$} We now proceed to explore Langlands parameters in characteristic $p$ for the group $G = \textnormal{U}(1,1)(\qpp/\qp)$.  For the analogous definitions in the complex setting, see \cite{Ro90}, \cite{Ro92}, and Appendix A of \cite{BC09}.  
\begin{defi}\label{params}
 \textnormal{(a)} A \emph{Langlands parameter} is a homomorphism 
$$\varphi: \gal_{\qp}\longrightarrow {}^LG = \textnormal{GL}_2(\fpb)\rtimes\gal_{\qp},$$
such that the composition of $\varphi$ with the canonical projection ${}^LG\longrightarrow \gal_{\qp}$ is the identity map of $\gal_{\qp}$.  We say two Langlands parameters are \emph{equivalent} if they are conjugate by an element of $\widehat{G} = \textnormal{GL}_2(\fpb)$.  

\noindent \textnormal{(b)} Let $\varphi:\gal_{\qp}\longrightarrow {}^LG$ be a Langlands parameter and let $0\leq k < p + 1$.  We define the \emph{twist of $\varphi$ by $\omega_2^{(1 - p)k}$}, denoted $\varphi\otimes\omega_2^{(1 - p)k}$, by 
\begin{center}
\begin{tabular}{ccc}
 $\varphi\otimes\omega_2^{(1 - p)k}(h)$ & = & $\varphi(h)\begin{pmatrix}\omega_2^{(1 - p)k}(h) & 0 \\ 0 & \omega_2^{(1 - p)k}(h)\end{pmatrix}$,\\
 $\varphi\otimes\omega_2^{(1 - p)k}(\textnormal{Fr}_ph)$ & = & $\varphi(\textnormal{Fr}_ph)\begin{pmatrix}\omega_2^{(1 - p)k}(h) & 0 \\ 0 & \omega_2^{(1 - p)k}(h)\end{pmatrix}$,
\end{tabular}
\end{center}
where $h\in\gal_{\qpp}$.  One easily checks that this is well-defined and gives a bona fide Langlands parameter.  
\end{defi}

\begin{defi}
 Let $\varphi:\gal_{\qp}\longrightarrow {}^LG$ be a Langlands parameter.  Since the group $\gal_{\qpp}$ acts trivially on $\widehat{G}$, the restriction of $\varphi$ to $\gal_{\qpp}$ must be of the form $$\varphi(h) = \varphi_0(h)h,$$ where $h\in\gal_{\qpp}$ and $\varphi_0:\gal_{\qpp}\longrightarrow \widehat{G}$ is a \emph{homomorphism}.  As $\widehat{G} = \textnormal{GL}_2(\fpb)$, $\varphi_0$ is a two-dimensional Galois representation; we call it the \emph{Galois representation associated to $\varphi$}.  
\end{defi}

\begin{defi}
 Let $\varphi: \gal_{\qp}\longrightarrow {}^LG$ be a Langlands parameter.  We say $\varphi$ is \emph{stable} if the associated Galois representation $\varphi_0:\gal_{\qpp}\longrightarrow \textnormal{GL}_2(\fpb)$ is irreducible.
\end{defi}

\vspace{\baselineskip}

Our first result on Langlands parameters for $\textnormal{U}(1,1)(\qpp/\qp)$ stands in stark contrast to the complex case (cf. \cite{Ro90}, Section 15.1).

\begin{prop}\label{nostables}
 There do not exist any stable parameters $\varphi:\gal_{\qp}\longrightarrow {}^LG$.  
\end{prop}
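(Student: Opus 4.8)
The plan is to show that the homomorphism property of $\varphi$, combined with the explicit action of $\gal_{\qp}$ on $\widehat{G}$, forces the associated Galois representation $\rho := \varphi_0 = \varphi|_{\gal_{\qpp}}$ to satisfy a symmetry that no irreducible two-dimensional representation can satisfy. Throughout I argue by contradiction: suppose a stable $\varphi$ exists, so that $\rho:\gal_{\qpp}\longrightarrow\textnormal{GL}_2(\fpb)$ is irreducible.

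First I would extract the key relation. Write $\varphi(\textnormal{Fr}_p) = M\,\textnormal{Fr}_p$ with $M\in\textnormal{GL}_2(\fpb)$, and recall that $\gal_{\qpp}$ is normal in $\gal_{\qp}$ with $\textnormal{Fr}_p$ representing the nontrivial coset. Applying $\varphi(\textnormal{Fr}_p h\textnormal{Fr}_p^{-1}) = \varphi(\textnormal{Fr}_p)\varphi(h)\varphi(\textnormal{Fr}_p)^{-1}$ for $h\in\gal_{\qpp}$, using that $\gal_{\qpp}$ acts trivially on $\widehat{G}$ while $\textnormal{Fr}_p$ acts by $g\longmapsto\Phi_2(g^\top)^{-1}\Phi_2^{-1}$, and carrying out the multiplication in the semidirect product ${}^LG$, I obtain
\begin{equation*}
\rho(\textnormal{Fr}_p h\textnormal{Fr}_p^{-1}) = M\Phi_2(\rho(h)^\top)^{-1}\Phi_2^{-1}M^{-1}\qquad(h\in\gal_{\qpp}).
\end{equation*}
Setting $\rho^{\textnormal{Fr}_p}(h) := \rho(\textnormal{Fr}_p h\textnormal{Fr}_p^{-1})$ and letting $\rho^\vee$ denote the contragredient, this says precisely that $\rho^{\textnormal{Fr}_p}\cong\rho^\vee$ as representations of $\gal_{\qpp}$, the intertwiner being $M\Phi_2$. (Notably, the further relation coming from $\varphi(\textnormal{Fr}_p)^2 = \varphi(\textnormal{Fr}_p^2)$ is not needed.)

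Next I would pin down the shape of an irreducible $\rho$. Since $\fpb$ has characteristic $p$ and wild inertia is pro-$p$, the wild inertia acts through unipotent matrices, hence trivially by irreducibility, so $\rho$ factors through the tame quotient of $\gal_{\qpp}$. On a topological generator $\tau$ of tame inertia the matrix $\rho(\tau)$ is semisimple (its order is prime to $p$), and the tame relation for $\qpp$ shows that a Frobenius $\textnormal{Fr}_p^2$ of $\qpp$ permutes the eigenvalues of $\rho(\tau)$ via $x\longmapsto x^{p^2}$. Irreducibility then forces $\rho(\tau)$ to be non-scalar with its two eigenvalues genuinely interchanged, so they are $\zeta$ and $\zeta^{p^2}$ for some $\zeta\in\fpb^\times$ with $\zeta^{p^2}\neq\zeta$. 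Now the contradiction falls out by eigenvalue bookkeeping: since $\textnormal{Fr}_p$ acts on tame inertia by $\tau\longmapsto\tau^{p}$ (Lemma \ref{brlemma}), the matrix $\rho^{\textnormal{Fr}_p}(\tau) = \rho(\tau)^p$ has eigenvalues $\{\zeta^p,\zeta^{p^3}\}$, whereas $\rho^\vee(\tau)$ has eigenvalues $\{\zeta^{-1},\zeta^{-p^2}\}$. The isomorphism $\rho^{\textnormal{Fr}_p}\cong\rho^\vee$ forces these multisets to coincide, and matching them in either possible way yields $\zeta^{p+1}=1$ or $\zeta^{p(p+1)}=1$; as $\zeta$ has order prime to $p$, both alternatives imply that the order of $\zeta$ divides $p+1$, hence divides $p^2-1$, so $\zeta\in\mathbb{F}_{p^2}^\times$ and $\zeta^{p^2}=\zeta$, contradicting irreducibility.

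I expect the main obstacle to be the middle step: rigorously establishing the classification of irreducible two-dimensional mod-$p$ representations of $\gal_{\qpp}$ (triviality of wild inertia in characteristic $p$, semisimplicity of $\rho(\tau)$, and the eigenvalue description forced by the tame relation). Once that structural input is in hand, the derivation of the relation $\rho^{\textnormal{Fr}_p}\cong\rho^\vee$ and the final eigenvalue comparison are routine, so no stable parameter $\varphi:\gal_{\qp}\longrightarrow{}^LG$ can exist.
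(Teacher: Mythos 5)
Your proof is correct, but it takes a genuinely different route from the paper's. You work entirely inside $\gal_{\qpp}$: you extract from the semidirect-product structure of ${}^LG$ the conjugate-self-duality relation $\rho^{\textnormal{Fr}_p}\cong\rho^\vee$ for $\rho=\varphi_0$, and then rule out irreducibility by an explicit eigenvalue computation on a tame inertia generator (the structural facts you invoke --- triviality on wild inertia, semisimplicity of $\rho(\tau)$, and the swapping of eigenvalues by $x\mapsto x^{p^2}$ --- are exactly the content of the standard classification of two-dimensional irreducible mod-$p$ representations, and your derivation of them is sound; your case analysis $\zeta^{p+1}=1$ versus $\zeta^{p(p+1)}=1$ checks out). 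The paper instead dualizes the inclusion $\textnormal{SL}_2(\qp)\hookrightarrow G$ to get a map ${}^LG\to\textnormal{PGL}_2(\fpb)\times\gal_{\qp}$; since $\textnormal{SL}_2$ is split, the composed parameter becomes a genuine projective representation of all of $\gal_{\qp}$, which is lifted to $\textnormal{GL}_2(\fpb)$ using Tate's vanishing $\textnormal{H}^2(\gal_{\qp},\fpb^\times)=0$ and then shown, via the classification of irreducible mod-$p$ representations of $\gal_{\qp}$ as inductions from $\gal_{\qpp}$ plus Mackey theory, to restrict reducibly to $\gal_{\qpp}$. The paper's route is shorter given the cited classification over $\qp$ and isolates the structural reason (the parameter descends to the full Galois group through $\textnormal{PGL}_2$), but it never uses the duality constraint specific to the unitary group; your route avoids both the projective lifting and Tate's theorem, and makes visible exactly how the condition $\rho^{\textnormal{Fr}_p}\cong\rho^\vee$ imposed by the ${}^LG$-structure obstructs irreducibility. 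The classification input required is essentially the same in both arguments, just applied over $\qpp$ in yours and over $\qp$ in the paper's.
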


\begin{proof}
The inclusion $\textnormal{SL}_2(\qp)\longhookrightarrow \textnormal{U}(1,1)(\qpp/\qp)$ gives rise, by duality, to a homomorphism of $L$-groups, given explicitly by
\begin{eqnarray*}
 \imath :{}^LG = \textnormal{GL}_2(\fpb)\rtimes \gal_{\qp} & \longrightarrow & {}^L\textnormal{SL}_2 = \textnormal{PGL}_2(\fpb)\times\gal_{\qp}\\
 gh & \longmapsto & \llbracket g\rrbracket h,
\end{eqnarray*}
where $g\in \textnormal{GL}_2(\fpb), h\in \gal_{\qp}$, and $\llbracket g\rrbracket$ denotes the image in $\textnormal{PGL}_2(\fpb)$ of the element $g$.  Given $\varphi:\gal_{\qp} \longrightarrow {}^LG$, we consider the Langlands parameter $\imath\circ \varphi:\gal_{\qp} \longrightarrow\textnormal{PGL}_2(\fpb)\times\gal_{\qp}$ and the associated Galois representation $(\imath\circ\varphi)_0:\gal_{\qpp}\longrightarrow \textnormal{PGL}_2(\fpb)$.  It is clear that $\varphi_0$ is irreducible if and only if $(\imath\circ\varphi)_0$ is irreducible, and therefore it suffices to examine $\imath\circ\varphi$.  

Now, since the group $\textnormal{SL}_2(\qp)$ is split over $\qp$, $\gal_{\qp}$ acts trivially on $\widehat{\textnormal{SL}_2} = \textnormal{PGL}_2(\fpb)$, and therefore $\imath\circ\varphi$ takes the form
$$\imath\circ\varphi(h) = \varphi'(h)h,$$
where $h\in \gal_{\qp}$ and $\varphi':\gal_{\qp}\longrightarrow \textnormal{PGL}_2(\fpb)$ is a homomorphism.  Assume $\varphi'$ is irreducible.  By a theorem of Tate (see the proof of Theorem 4 (and its corollary) in \cite{Se77b}), we have
$$\textnormal{H}^2(\gal_{\qp},\fpb^\times) = 0,$$
which implies that every projective representation has a lift to $\textnormal{GL}_2(\fpb)$.  Hence, we may write $\varphi'(h) = \llbracket \widetilde{\varphi}'(h)\rrbracket$, where $\widetilde{\varphi}':\gal_{\qp}\longrightarrow \textnormal{GL}_2(\fpb)$ is an irreducible Galois representation.

It is well-known (\cite{Vig97}, Section 1.14) that every two-dimensional irreducible mod-$p$ representation of $\gal_{\qp}$ is isomorphic to a representation of the form
$$\textnormal{ind}_{\gal_{\qpp}}^{\gal_{\qp}}(\mu_{2,\lambda}\omega_2^m),$$
where $\lambda\in\fpb^\times$, and $0\leq m < p^2 - 1$ satisfies $m\not\equiv pm~(\textnormal{mod}~ p^2 - 1)$.  By Mackey theory, we have 
$$\widetilde{\varphi}'|_{\gal_{\qpp}} \cong \textnormal{ind}_{\gal_{\qpp}}^{\gal_{\qp}}(\mu_{2,\lambda}\omega_2^m)|_{\gal_{\qpp}} \cong \mu_{2,\lambda}\omega_2^m\oplus\mu_{2,\lambda}\omega_2^{pm},$$
which implies that the original Langlands parameter $\varphi$ cannot be stable.  
\end{proof}

\vspace{\baselineskip}
Using the parameters $\eta_{k,\ell}$ above, we obtain the first nontrivial (necessarily nonstable) examples of Langlands parameters for the group $G$.

\begin{defi}\label{endparams}
 Let $0\leq k,\ell < p+1$.  We denote by $\varphi_{k,\ell}:\gal_{\qp}\longrightarrow {}^LG$ the Langlands parameter obtained by composing $\eta_{k,\ell}$ (of Proposition \ref{u1params}) with $\xi$ (of Proposition \ref{Lembedding}).  Explicitly, we have
\begin{center}
\begin{tabular}{ccc}
 $\varphi_{k,\ell}(\textnormal{Fr}_p)$ & $=$ & $\begin{pmatrix}0 & -1\\ 1 & \phantom{-}0\end{pmatrix}\textnormal{Fr}_p$,\\
 $\varphi_{k,\ell}(h)$ & $=$ & $\begin{pmatrix}\mu_{2,-1}\omega_2^{(1 - p)k}(h) & 0\\ 0 & \mu_{2,-1}\omega_2^{(1 - p)\ell}(h)\end{pmatrix}h$,
\end{tabular}
\end{center}
for $h\in\gal_{\qpp}$.  

We say $\varphi_{k,\ell}$ is \emph{regular} if $k\neq \ell$, and \emph{singular} otherwise.  
\end{defi}

\begin{lemm}\label{paramequiv}
 Let $0\leq k, k', \ell, \ell' < p + 1$.  Then $\varphi_{k,\ell}$ is equivalent to $\varphi_{k',\ell'}$ if and only if the sets $\{k,\ell\}$ and $\{k',\ell'\}$ coincide.  
\end{lemm}

\begin{proof}
This is left as an easy exercise.  
\end{proof}

\begin{coro}\label{scbij}
 There exists a bijection between $\widehat{G}$-equivalence classes of regular Langlands parameters coming from the endoscopic group $J = (\textnormal{U}(1)\times\textnormal{U}(1))(\qpp/\qp)$ and $L$-packets of irreducible supercuspidal representations on the group $G = \textnormal{U}(1,1)(\qpp/\qp)$, given by
$$\varphi_{k,\ell}\longleftrightarrow \{\cusp{\ell}{[k - \ell - 1]},~\cusp{k}{[\ell - k - 1]}\},$$
where $0\leq k,\ell < p + 1$, and where $[k - \ell - 1]$ (resp. $[\ell - k - 1]$) denotes the unique integer between $0$ and $p - 1$ equivalent to $k - \ell - 1$ (resp. $\ell - k - 1$) modulo $p + 1$.  Moreover, this bijection is compatible with twisting by characters on both sides (under the one-dimensional version of the correspondence of Corollary \ref{u1llc}).
\end{coro}

\begin{proof}
Let $\Pi_{k,\ell}$ denote the $L$-packet on the right-hand side of the correspondence above.  Proposition \ref{Lpackets} and Lemma \ref{paramequiv} show that $\Pi_{k,\ell}$ and $\Pi_{k',\ell'}$ are identical if and only if $\{k,\ell\} = \{k', \ell'\}$, if and only if $\varphi_{k,\ell}$ is equivalent to $\varphi_{k', \ell'}$.  
\end{proof}

\vspace{\baselineskip}

Our next task will be to extend the correspondence of the above corollary to nonsupercuspidal $L$-packets.  In doing so, we are led to consider Langlands parameters arising from a proper Levi subgroup of ${}^LG$.  We let 
$${}^LT := \widehat{T}\rtimes \gal_{\qp},$$
where $\widehat{T}$ is the diagonal maximal torus of $\widehat{G}$, and the action of $\gal_{\qp}$ on $\widehat{T}$ is the restriction of the action on $\widehat{G}$.  

\begin{prop}\label{paramsfromT}
 Let $\varphi:\gal_{\qp}\longrightarrow {}^LG$ be a Langlands parameter which factors through the group ${}^LT$, that is, such that $\varphi$ is the composition 
$$\gal_{\qp}\longrightarrow {}^LT\longhookrightarrow {}^LG,$$
where the second arrow denotes the canonical inclusion.  Then there exist $0\leq r < p^2 - 1$ and $\lambda\in\fpb^\times$ such that $\varphi$ is equivalent to the Langlands parameter $\psi_{r,\lambda}$, defined by
\begin{center}
\begin{tabular}{ccc}
 $\psi_{r,\lambda}(\textnormal{Fr}_p)$ & $=$ & $\begin{pmatrix}1 & 0 \\ 0 & \lambda\end{pmatrix}\textnormal{Fr}_p$\\
 $\psi_{r,\lambda}(h)$ & $=$ & $\begin{pmatrix}\mu_{2,\lambda^{-1}}\omega_2^r(h) & 0 \\ 0 & \mu_{2,\lambda}\omega_2^{-pr}(h) \end{pmatrix}h$,
\end{tabular}
\end{center}
where $h\in \gal_{\qpp}$.  
\end{prop}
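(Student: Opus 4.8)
The plan is to pin down $\varphi$ by evaluating it on a topological generating set of $\gal_{\qp}$ and then to normalize by an inner automorphism of $\widehat{T}$. First I would compute the restricted Galois action on $\widehat{T}$: from $\textnormal{Fr}_p g\textnormal{Fr}_p^{-1} = \Phi_2(g^\top)^{-1}\Phi_2^{-1}$ one finds that $\textnormal{Fr}_p$ sends $\mathrm{diag}(x,y)$ to $\mathrm{diag}(y^{-1},x^{-1})$, an involution that both swaps and inverts the entries, while $\gal_{\qpp}$ acts trivially. Since $\gal_{\qpp}$ acts trivially on $\widehat{T}$, the restriction $\varphi|_{\gal_{\qpp}}$ has the shape $h\mapsto \mathrm{diag}(\chi_1(h),\chi_2(h))\,h$ for two smooth characters $\chi_i\colon\gal_{\qpp}\to\fpb^\times$, and by the classification of such characters obtained above I may write $\chi_i = \mu_{2,\lambda_i}\omega_2^{r_i}$ with $\lambda_i\in\fpb^\times$ and $0\le r_i < p^2-1$. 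I also write $\varphi(\textnormal{Fr}_p) = \mathrm{diag}(a,b)\,\textnormal{Fr}_p$ for some $a,b\in\fpb^\times$. Because $\qpp/\qp$ is unramified we have $\ii_{\qpp}=\ii_{\qp}$, so $\gal_{\qpp}$ is topologically generated by $\ii_{\qp}$ together with $\textnormal{Fr}_p^2$; it therefore suffices to determine $\varphi$ on these.

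Next I impose that $\varphi$ is a homomorphism. Evaluating the identity $\varphi(\textnormal{Fr}_p)^2 = \varphi(\textnormal{Fr}_p^2)$ in the semidirect product (using that $\omega_2(\textnormal{Fr}_p^2)=1$ and $\mu_{2,\lambda_i}(\textnormal{Fr}_p^2)=\lambda_i$) gives $\mathrm{diag}(ab^{-1},ba^{-1}) = \mathrm{diag}(\lambda_1,\lambda_2)$, hence $\lambda_1 = ab^{-1}$ and $\lambda_2 = \lambda_1^{-1}$. Then, for $h\in\ii_{\qp}$, I expand the conjugation relation $\varphi(\textnormal{Fr}_p h\textnormal{Fr}_p^{-1}) = \varphi(\textnormal{Fr}_p)\varphi(h)\varphi(\textnormal{Fr}_p)^{-1}$; the right-hand side picks up the swap-and-invert action, so its diagonal part is $\mathrm{diag}(\omega_2^{-r_2}(h),\omega_2^{-r_1}(h))$, while the left-hand side is $\mathrm{diag}(\omega_2^{r_1}(\textnormal{Fr}_p h\textnormal{Fr}_p^{-1}),\omega_2^{r_2}(\textnormal{Fr}_p h\textnormal{Fr}_p^{-1}))$. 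Applying Lemma \ref{brlemma} to rewrite the latter as $\mathrm{diag}(\omega_2^{pr_1}(h),\omega_2^{pr_2}(h))$ and using that $\omega_2$ has order $p^2-1$ on $\ii_{\qp}$, I obtain $pr_1\equiv -r_2$ and $pr_2\equiv -r_1\pmod{p^2-1}$, which collapse to the single condition $r_2\equiv -pr_1\pmod{p^2-1}$.

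Finally I normalize $t = \mathrm{diag}(a,b)$. Conjugating $\varphi$ by $d=\mathrm{diag}(s,u)\in\widehat{T}$ leaves $\varphi|_{\gal_{\qpp}}$ unchanged (the image lies in the abelian group $\widehat{T}$ and $\gal_{\qpp}$ acts trivially on $d$), while it replaces $\varphi(\textnormal{Fr}_p)$ by $d\,t\,{}^{\textnormal{Fr}_p}(d^{-1})\,\textnormal{Fr}_p$; a short computation shows this multiplies $t$ by the scalar $su$. Choosing $su=a^{-1}$ thus normalizes $t$ to $\mathrm{diag}(1,\lambda)$ with $\lambda:=\lambda_2=ba^{-1}$. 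Setting $r:=r_1$ and recording $\lambda_1=\lambda^{-1}$ and $r_2\equiv -pr$, I recover exactly the asserted form $\textnormal{Fr}_p\mapsto\mathrm{diag}(1,\lambda)\textnormal{Fr}_p$ and $h\mapsto\mathrm{diag}(\mu_{2,\lambda^{-1}}\omega_2^{r}(h),\mu_{2,\lambda}\omega_2^{-pr}(h))\,h$. I expect the main obstacle to be the semidirect-product bookkeeping in the conjugation computations, in particular keeping track that $\textnormal{Fr}_p$ acts by both a swap and an inversion and that $\textnormal{Fr}_p^{-1}$ induces the same automorphism of $\widehat{T}$ as $\textnormal{Fr}_p$ (the action factoring through $\textnormal{Gal}(\qpp/\qp)$), since sign and index errors there would corrupt the final congruence $r_2\equiv -pr$.
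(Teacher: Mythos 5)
Your proposal is correct and follows essentially the same route as the paper: write $\varphi|_{\gal_{\qpp}}$ as a pair of characters $\mu_{2,\lambda_i}\omega_2^{r_i}$, use $\varphi(\textnormal{Fr}_p)^2=\varphi(\textnormal{Fr}_p^2)$ to force $\lambda_2=\lambda_1^{-1}$, and use the conjugation relation on inertia together with Lemma \ref{brlemma} to force $r_2\equiv -pr_1 \pmod{p^2-1}$. The only cosmetic difference is that you carry a general $\varphi(\textnormal{Fr}_p)=\mathrm{diag}(a,b)\textnormal{Fr}_p$ and normalize by conjugation in $\widehat{T}$ at the end, whereas the paper performs that normalization (stated without the explicit computation) at the outset.
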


\begin{proof}
 We proceed as in the proof of Proposition \ref{u1params}.  Using the action of $\gal_{\qp}$ on $\widehat{T}$ we may assume that, up to equivalence, we have
$$\varphi(\textnormal{Fr}_p) = \begin{pmatrix}1 & 0 \\ 0 & \lambda\end{pmatrix}\textnormal{Fr}_p$$
for some $\lambda\in \fpb^\times$.  Let $\varphi_0:\gal_{\qpp}\longrightarrow \widehat{T}\longhookrightarrow \widehat{G}$ be the Galois representation associated to $\varphi$, so that
$$\varphi(h) = \varphi_0(h)h = \begin{pmatrix}\mu_{2,\lambda_1}\omega_2^{r_1}(h) & 0 \\ 0 & \mu_{2,\lambda_2}\omega_2^{r_2}(h) \end{pmatrix}h,$$
where $\lambda_1, \lambda_2\in\fpb^\times$, $0\leq r_1, r_2 < p^2 - 1$, and $h\in\gal_{\qpp}$.  Again using Lemma \ref{brlemma} and the definition of ${}^LT$, we obtain
\begin{eqnarray*}
 \begin{pmatrix}\lambda_1 & 0 \\ 0 & \lambda_2\end{pmatrix}\textnormal{Fr}_p^2 & = & \varphi(\textnormal{Fr}_p^2)\\
 & = & \varphi(\textnormal{Fr}_p)^2\\
 & = & \begin{pmatrix}1 & 0 \\ 0 & \lambda\end{pmatrix}\textnormal{Fr}_p\begin{pmatrix}1 & 0 \\ 0 & \lambda\end{pmatrix}\textnormal{Fr}_p\\
 & = & \begin{pmatrix}\lambda^{-1} & 0 \\ 0 & \lambda\end{pmatrix}\textnormal{Fr}_p^2,\\
 \begin{pmatrix}\omega_2^{pr_1}(h) & 0 \\ 0 & \omega_2^{pr_2}(h)\end{pmatrix}\textnormal{Fr}_ph\textnormal{Fr}_p^{-1} & = & \varphi(\textnormal{Fr}_ph\textnormal{Fr}_p^{-1})\\
 & = & \varphi(\textnormal{Fr}_p)\varphi(h)\varphi(\textnormal{Fr}_p)^{-1}\\
 & = & \begin{pmatrix}\omega_2^{-r_2}(h) & 0 \\ 0 & \omega_2^{-r_1}(h)\end{pmatrix}\textnormal{Fr}_ph\textnormal{Fr}_p^{-1},
\end{eqnarray*}
which gives the result.  
\end{proof}

\vspace{\baselineskip}
We shall also need more precise information about equivalence classes of the Langlands parameters $\varphi_{k,\ell}$ and $\psi_{r,\lambda}$.  This is the content of the following two lemmas, whose proofs are left as exercises for the reader.  

\begin{lemm}\label{paramequiv2}
  Let $0\leq r,r'<p^2 - 1$ and $\lambda, \lambda'\in \fpb^\times$.  Then $\psi_{r,\lambda}$ is equivalent to $\psi_{r',\lambda'}$ if and only if $r' = r, \lambda' = \lambda$ or $r' \equiv -pr~(\textnormal{mod}~p^2 - 1), \lambda' = \lambda^{-1}.$
\end{lemm}

\begin{lemm}\label{endvsnonend}\hfill
\begin{enumerate}[(a)]
\item Assume $p\neq 2$.  Let $0\leq k,\ell < p+1,~~~$ $0\leq r < p^2 - 1$, and $\lambda\in\fpb^\times$.  Then $\varphi_{k,\ell}$ is equivalent to $\psi_{r,\lambda}$ if and only if $k = \ell$, $r\equiv (1 - p)k~(\textnormal{mod}~p^2 - 1),$ and $\lambda = -1$.  
\item Assume $p = 2$.  Then there are no equivalences between parameters $\varphi_{k,\ell}$ and $\psi_{r,\lambda}$.  
 \end{enumerate}
\end{lemm}

\vspace{\baselineskip}

\begin{defi}\label{corr}
 We define a \emph{``semisimple mod-$p$ correspondence for $G = \textnormal{U(1,1)}(\qpp/\qp)$''} to be the following correspondence between certain $\widehat{G}$-equivalence classes of Langlands parameters over $\fpb$ and certain isomorphism classes of semisimple $L$-packets on $\textnormal{U(1,1)}(\qpp/\qp)$:
\begin{itemize}
 \item \emph{the supercuspidal case:}
Let $0\leq k,\ell<p + 1$ with $k\neq \ell$.
$$\boxed{\varphi_{k,\ell} \longleftrightarrow \left\{\cusp{\ell}{[k - \ell - 1]},~\cusp{k}{[\ell - k - 1]}\right\}}$$
\vspace{\baselineskip}
 \item \emph{the nonsupercuspidal case:}
Let $0\leq r \leq p - 1$, $\lambda\in\fpb^\times$, and $0\leq k < p+1$.  
\begin{itemize}
\item if $(r,\lambda)\neq (0,1),~(p-1,1)$:

\vspace{.5\baselineskip}
\begin{center}
\fbox{$\begin{array}{l}
\psi_{r,\lambda}\otimes\omega_2^{(1-p)k} = \psi_{r + (1-p)k,\lambda}\\ 
\qquad\longleftrightarrow \left\{(\omega^k\circ\det)\otimes\textnormal{ind}_B^G(\mu_{\lambda^{-1}}\omega^{-pr})~\oplus~(\omega^k\circ\det)\otimes\textnormal{ind}_B^G(\mu_{\lambda}\omega^{r})\right\}
 \end{array}$}
\end{center}
\vspace{.5\baselineskip}

\item if $(r,\lambda) = (0,1)$:
\vspace{.5\baselineskip}
\begin{center}
\fbox{$\begin{array}{l}
\psi_{0,1}\otimes\omega_2^{(1-p)k} = \psi_{(1-p)k,1}\\ \qquad\longleftrightarrow \left\{\omega^k\circ\det~\oplus~(\omega^k\circ\det)\otimes\textnormal{St}_G~\oplus~\omega^k\circ\det~\oplus~(\omega^k\circ\det)\otimes\textnormal{St}_G\right\}
\end{array}$}
\end{center}
\vspace{.5\baselineskip}

\end{itemize}
\end{itemize}
\end{defi}

\vspace{2\baselineskip}

\subsection{Remarks}
\noindent \textnormal{(1)}~ Corollary \ref{scbij} and Lemmas \ref{paramequiv} and \ref{paramequiv2} imply that the correspondence above is well-defined.  

\vspace{\baselineskip}
\noindent \textnormal{(2)}~ We may state this correspondence more elegantly as follows.  For $0\leq r\leq p - 1$, the group $K$ acts irreducibly on the representation $\sigma_r$ defined in Subsection \ref{gl2sc}.  We let $\tau_{r,1}$ denote the endomorphism of $\textnormal{c-ind}_K^G(\sigma_r)$ which corresponds via Frobenius Reciprocity to the function with support $K\alpha^{-1}K$ and taking the value $U_r$ at $\alpha^{-1}$.   Here $U_r$ is the endomorphism of $\sigma_r$ given by 
$$U_r.x^{r - i}y^i = \begin{cases}0 & \textnormal{if}~i\neq r,\\ y^r & \textnormal{if}~i = r,\end{cases}$$
and 
$$\alpha = \begin{pmatrix}\varpi^{-1} & 0 \\ 0 & \varpi \end{pmatrix}.$$
The spherical Hecke algebra $\hh_{\fpb}(G,K,\sigma_r)$ of $G$-equivariant endomorphisms of the compactly induced representation $\textnormal{c-ind}_K^G(\sigma_r)$ is then isomorphic to a polynomial algebra over $\fpb$ in one variable, generated by an endomorphism $\tau_r$.  Explicitly, we have
$$\tau_r = \begin{cases}\tau_{r,1}& \textnormal{if}~r \neq 0,\\ \tau_{r,1} + 1 & \textnormal{if}~ r = 0,\end{cases}$$
(this definition comes from the Satake isomorphism).  For $\lambda\in\fpb^\times$, we define
$$\bpi(r,\lambda):=\frac{\textnormal{c-ind}_K^G(\sigma_r)}{(\tau_r - \lambda)}.$$
A simple argument shows that 
$$\bpi(r,\lambda)|_{G_\s}\cong \pi_0(r,\lambda),$$
where $\pi_0(r,\lambda)$ denotes the representation of $\textnormal{SL}_2(\qp)$ (viewed as a representation of $G_\s$) defined in \cite{Ab12}, Section 3.4.  Using Th\'eor\`eme 3.18 (\emph{loc. cit.}) and the existence of certain $I(1)$-invariant elements of $\bpi(r,\lambda)$ (along with Proposition \ref{nonscres}), we deduce
$$\bpi(r,\lambda)\cong \begin{cases}\textnormal{ind}_B^G(\mu_{\lambda^{-1}}\omega^{-pr})& \textnormal{if}~(r,\lambda)\neq (0,1),\\ \textnormal{nonsplit extension of}~ 1_G~\textnormal{by}~\textnormal{St}_G& \textnormal{if}~(r,\lambda) = (0,1). \end{cases}$$

If we let $\pi^{\textnormal{ss}}$ denote the semisimplification of a smooth representation $\pi$ of $G$, we obtain
$$\bpi(r,\lambda)^\textnormal{ss} = \begin{cases}\textnormal{ind}_B^G(\mu_{\lambda^{-1}}\omega^{-pr}) & \textnormal{if}~(r,\lambda)\neq (0,1),~(p-1,1),\\ \omega^{p}\circ\det~\oplus~(\omega^{p}\circ\det)\otimes\textnormal{St}_G & \textnormal{if}~(r,\lambda) = (p - 1,1),\\ 1_G~\oplus~\textnormal{St}_G & \textnormal{if}~(r,\lambda) = (0,1).\end{cases}$$

\vspace{\baselineskip}

The correspondence of Definition \ref{corr} now takes the form:

\begin{defnI}\hfill\newline
\noindent $\bullet$ \emph{The supercuspidal case:}
Let $0\leq k,\ell<p + 1$ with $k\neq \ell$.
$$\boxed{\varphi_{k,\ell} \longleftrightarrow \left\{\cusp{\ell}{[k - \ell - 1]},~\cusp{k}{[\ell - k - 1]}\right\}}$$
\noindent $\bullet$ \emph{The nonsupercuspidal case:}
Let $0\leq r \leq p - 1$, $\lambda\in\fpb^\times$, and $0\leq k < p+1$.
\begin{center}
\fbox{$\begin{array}{l}
 \psi_{r,\lambda}\otimes\omega_2^{(1 - p)k} = \psi_{r + (1 - p)k,\lambda} \\
 \longleftrightarrow \{(\omega^k\circ\det)\otimes\bpi(r,\lambda)^{\textnormal{ss}}~\oplus~(\omega^{k + r + 1}\circ\det)\otimes\bpi(p - 1 - r,\lambda^{-1})^{\textnormal{ss}}\}
\end{array}$}
\end{center}
\end{defnI}

\vspace{\baselineskip}
\noindent \textnormal{(3)}~ Suppose $p\neq 2$.  The correspondences of Corollary \ref{u1llc} and Definition \ref{corr}, along with the homomorphism $\xi$ of Proposition \ref{Lembedding}, imply that we have an \emph{endoscopic transfer map}
$$\widetilde{\xi}:\mathfrak{Irr}_{\fpb}((\textnormal{U}(1)\times\textnormal{U}(1))(\qpp/\qp))\longrightarrow \mathfrak{L}\textnormal{-}\mathfrak{pack}_{\fpb}(\textnormal{U}(1,1)(\qpp/\qp))$$
from the set of isomorphism classes of smooth irreducible representations of $(\textnormal{U}(1)\times\textnormal{U}(1))(\qpp/\qp)$ to the set of isomorphism classes of $L$-packets of semisimple representations on $\textnormal{U}(1,1)(\qpp/\qp)$.  Using Lemma \ref{endvsnonend}, the map $\widetilde{\xi}$ is given explicitly by
$$\widetilde{\xi}\left(\omega^k\otimes\omega^\ell\right) = \begin{cases} \left\{\cusp{\ell}{[k - \ell - 1]},~\cusp{k}{[\ell - k - 1]}\right\} & \textnormal{if}~ k \neq \ell,\\ \left\{\textnormal{ind}_B^G(\mu_{-1}\omega^{(1 - p)k})~\oplus~\textnormal{ind}_B^G(\mu_{-1}\omega^{(1 - p)k})\right\} & \textnormal{if}~k = \ell.\end{cases}$$
This bears a striking resemblence to the complex case (see Proposition 11.1.1 of \cite{Ro90}, especially points (c) and (e)).  Moreover, the equation
$$\widetilde{\xi}\left(\omega^0\otimes\omega^0\right) = \left\{\textnormal{ind}_B^G(\mu_{-1})~\oplus~\textnormal{ind}_B^G(\mu_{-1})\right\}$$
gives an example of \emph{transfer of unramified representations}, which may also be deduced from (a modified version of) the discussion in Section 2.7 of \cite{Mi11} (see also Theorem 4.4, \emph{loc. cit.}, and Section 4.5 of \cite{Ro90}).

\appendix
\section{Relation to $C$-groups}

We now translate the results of the previous section into the language of $C$-groups of Buzzard--Gee \cite{BG13}.  As most of the computations are similar to those already given in the case of $L$-groups, we will omit them.  We refer to \cite{BG13} throughout, in particular drawing on the example contained in Section 8.3.  Additionally, we assume throughout that $p\neq 2$.

The $C$-group of $G$, denoted ${}^CG$, is defined as ${}^L\widetilde{G}$, where $\widetilde{G}$ is an algebraic group defined by a certain central $\gm$-extension of the algebraic group defining $G$:
$$1\longrightarrow \gm \longrightarrow \widetilde{G}\longrightarrow \mathbf{U}(1,1) \longrightarrow 1.$$
For the precise definition, see Proposition 5.3.1 of \cite{BG13}.  By Proposition 5.3.3 (\emph{loc. cit.}), the $\fpb$-points of the dual group $\widehat{\widetilde{G}}$ take the form
$$\widehat{\widetilde{G}} \cong (\widehat{G}\times\fpb^\times)/\langle(-\textnormal{id},-1)\rangle = (\textnormal{GL}_2(\fpb)\times\fpb^\times)/\langle(-\textnormal{id},-1)\rangle.$$
We will denote elements of $\widehat{\widetilde{G}}$ by $[g,\mu]$, with $g\in \textnormal{GL}_2(\fpb)$, $\mu\in\fpb^\times$.  The action of $\gal_{\qp}$ on $\widehat{\widetilde{G}}$ is the one induced from its action on $\widehat{G}$.  Therefore, we see that the $C$-group is given by the semidirect product
$${}^CG = \widehat{\widetilde{G}}\rtimes \gal_{\qp} = \left((\textnormal{GL}_2(\fpb)\times\fpb^\times)/\langle(-\textnormal{id},-1)\rangle\right)\rtimes\gal_{\qp},$$
with the action of $\gal_{\qp}$ on $\widehat{\widetilde{G}}$ given by
\begin{eqnarray*}
\textnormal{Fr}_p[g,\mu]\textnormal{Fr}_p^{-1} & = & [\Phi_2(g^\top)^{-1}\Phi_2^{-1},\mu],\\
 h[g,\mu]h^{-1} & = & [g,\mu],
\end{eqnarray*}
for $g\in \widehat{G}$, $\mu\in\fpb^\times$, $h\in \gal_{\qpp}$.

The inclusion $\gm\longrightarrow \widetilde{G}$ induces, by duality, a map $d:{}^CG\longrightarrow \fpb^\times$, which is given explicitly by
\begin{center}
 \begin{tabular}{rccl}
  $d:$ & ${}^CG$ & $\longrightarrow$ & $\fpb^\times$\\
 & $[g,\mu]\textnormal{Fr}_p$ & $\longmapsto$ &  $\mu^2$,\\
 & $[g,\mu]h$ & $\longmapsto$ &  $\mu^2$,
 \end{tabular}
\end{center}
where $g\in \widehat{G}, \mu\in \fpb^\times$, and $h\in \gal_{\qpp}$.

We now consider Langlands parameters with target ${}^CG$.  

\begin{defi}
 \textnormal{(a)} A \emph{Langlands parameter} is a homomorphism 
$${}^C\varphi: \gal_{\qp}\longrightarrow {}^CG = \widehat{\widetilde{G}}\rtimes\gal_{\qp},$$
such that the composition of ${}^C\varphi$ with the canonical projection ${}^CG\longrightarrow \gal_{\qp}$ is the identity map of $\gal_{\qp}$.  We say two Langlands parameters are \emph{equivalent} if they are conjugate by an element of $\widehat{\widetilde{G}}$.  

\noindent \textnormal{(b)} Let ${}^C\varphi:\gal_{\qp}\longrightarrow {}^CG$ be a Langlands parameter.  Since the group $\gal_{\qpp}$ acts trivially on $\widehat{\widetilde{G}}$, the restriction of ${}^C\varphi$ to $\gal_{\qpp}$ must be of the form 
$${}^C\varphi(h) = {}^C\varphi_0(h)h,$$ where $h\in\gal_{\qpp}$ and ${}^C\varphi_0:\gal_{\qpp}\longrightarrow \widehat{\widetilde{G}}$ is a \emph{homomorphism}.  We say ${}^C\varphi$ is \emph{stable} if the image of the associated Galois representation ${}^C\varphi_0:\gal_{\qpp}\longrightarrow \widehat{\widetilde{G}}$ is not contained in any proper parabolic subgroup of $\widehat{\widetilde{G}}$.  
\end{defi}

\vspace{\baselineskip}

The first concrete examples of Langlands parameters with target ${}^CG$ are given by the following definition.  Note that the construction of $\widehat{\widetilde{G}}$ shows that the element $\omega_1^{1/2}$ appearing in the definition is unambiguous.  

\begin{defi}
 \textnormal{(a)} Let $0\leq k,\ell < p+1$.  We denote by ${}^C\varphi_{k,\ell}:\gal_{\qp}\longrightarrow {}^CG$ the following Langlands parameter:
\begin{center}
\begin{tabular}{ccc}
 ${}^C\varphi_{k,\ell}(\textnormal{Fr}_p)$ & $=$ & $\left[\begin{pmatrix}0 & -1\\ 1 & \phantom{-}0\end{pmatrix}, 1\right]\textnormal{Fr}_p$,\\
 ${}^C\varphi_{k,\ell}(h)$ & $=$ & $\left[\begin{pmatrix}\mu_{2,-1}\omega_2^{-1 + (1 - p)k}(h) & 0\\ 0 & \mu_{2,-1}\omega_2^{-1 + (1 - p)\ell}(h)\end{pmatrix}\omega_1^{1/2}(h),~\omega_1^{1/2}(h)\right]h$,
\end{tabular}
\end{center}
for $h\in\gal_{\qpp}$.  

\noindent \textnormal{(b)} Let $0\leq r < p^2 - 1$ and $\lambda\in\fpb^\times$.  We denote by ${}^C\psi_{r,\lambda}:\gal_{\qp}\longrightarrow {}^CG$ the following Langlands parameter:
\begin{center}
\begin{tabular}{ccc}
 ${}^C\psi_{r,\lambda}(\textnormal{Fr}_p)$ & $=$ & $\left[\begin{pmatrix}1 & 0\\ 0 & \lambda\end{pmatrix},1\right]\textnormal{Fr}_p$,\\
 ${}^C\psi_{r,\lambda}(h)$ & $=$ & $\left[\begin{pmatrix}\mu_{2,\lambda^{-1}}\omega_2^r(h) & 0 \\ 0 & \mu_{2,\lambda}\omega_2^{-pr - (p + 1)}(h) \end{pmatrix}\omega_1^{1/2}(h),~\omega_1^{1/2}(h)\right]h$,
\end{tabular}
\end{center}
for $h\in\gal_{\qpp}$.  
\end{defi}

\begin{lemm}

\begin{enumerate}[(a)]
\item  Let $0 \leq k,k',\ell,\ell' < p + 1$.  Then ${}^C\varphi_{k,\ell}$ is equivalent to ${}^C\varphi_{k',\ell'}$ if an only if the sets $\{k,\ell\}$ and $\{k',\ell'\}$ coincide.  

\item Let $0\leq r,r' < p^2 - 1$ and $\lambda,\lambda'\in \fpb^\times$.  Then ${}^C\psi_{r,\lambda}$ is equivalent to ${}^C\psi_{r',\lambda'}$ if and only if $r' = r$, $\lambda' = \lambda$ or $r' \equiv -pr - (p + 1)~(\textnormal{mod}~p^2 - 1), \lambda' = \lambda^{-1}$.  

\item Let $0\leq k,\ell < p + 1$, $0\leq r < p^2 - 1$ and $\lambda\in \fpb^\times$.  Then ${}^C\varphi_{k,\ell}$ is equivalent to ${}^C\psi_{r,\lambda}$ if and only if $k = \ell$, $r\equiv -1 + (1 - p)k~(\textnormal{mod}~p^2 - 1),$ and $\lambda = -1$.
\end{enumerate}
\end{lemm}

\begin{proof}
 This is left as an exercise.  
\end{proof}

We may now deduce the following results.

\begin{prop}
\begin{enumerate}[(a)]
\item There do not exist any stable parameters ${}^C\varphi:\gal_{\qp}\longrightarrow {}^CG$.  

\item Let ${}^C\varphi:\gal_{\qp}\longrightarrow {}^CG$ denote a Langlands parameter which is \emph{semisimple} (that is, for which ${}^C\varphi_0(h)$ is semisimple for every $h\in \gal_{\qpp}$), and for which $d\circ{}^C\varphi:\gal_{\qp}\longrightarrow \fpb^\times$ is equal to $\omega_1$ (cf. \cite{BG13}, Conjecture 5.3.4).  Then ${}^C\varphi$ is equivalent to either ${}^C\varphi_{k,\ell}$ or ${}^C\psi_{r,\lambda}$.  
\end{enumerate}
\end{prop}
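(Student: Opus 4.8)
The plan is to deduce both parts from their $L$-group counterparts, Proposition \ref{nostables} for (a) and Propositions \ref{u1params} and \ref{paramsfromT} for (b), adding only the bookkeeping forced by the central $\gm$-extension and the similitude condition $d\circ{}^C\varphi = \omega_1$.

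For (a), I would first note that the assignment $[g,\mu]\longmapsto \llbracket g\rrbracket$ defines a homomorphism ${}^CG\longrightarrow \textnormal{PGL}_2(\fpb)\times\gal_{\qp}$, and that a proper parabolic subgroup of $\widehat{\widetilde{G}}$ is precisely the preimage of a Borel subgroup of $\textnormal{PGL}_2(\fpb)$ (the central $\fpb^\times$-factor playing no role). Thus ${}^C\varphi$ is stable if and only if the image of the composite $\gal_{\qpp}\longrightarrow \textnormal{PGL}_2(\fpb)$ is not contained in a Borel subgroup. But discarding the $\mu$-coordinate turns this composite into exactly the projective representation $\imath\circ\varphi$ analyzed in the proof of Proposition \ref{nostables}; I would therefore invoke that argument verbatim (lift along $\textnormal{H}^2(\gal_{\qp},\fpb^\times)=0$, then use that every two-dimensional irreducible mod-$p$ representation of $\gal_{\qp}$ is induced from $\gal_{\qpp}$ and hence reducible on restriction) to reach a contradiction.

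For (b), part (a) shows ${}^C\varphi$ is not stable, so ${}^C\varphi_0(\gal_{\qpp})$ lies in a proper parabolic; semisimplicity then lets me conjugate ${}^C\varphi_0$ into the maximal torus $\widehat{\widetilde{T}}$ of $\widehat{\widetilde{G}}$. The relation $\textnormal{Fr}_p\,{}^C\varphi_0(h)\,\textnormal{Fr}_p^{-1} = {}^C\varphi_0(\textnormal{Fr}_p h\textnormal{Fr}_p^{-1})$ forces ${}^C\varphi(\textnormal{Fr}_p)$ to normalize $\widehat{\widetilde{T}}$, so ${}^C\varphi(\textnormal{Fr}_p) = [n,\mu_0]\textnormal{Fr}_p$ with $n$ in the normalizer of $\widehat{T}$ in $\textnormal{GL}_2(\fpb)$. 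I would then split on the image of $n$ in the Weyl group $W\cong\mathbb{Z}/2$. When $n$ is a torus element, squaring $\textnormal{Fr}_p$ and applying Lemma \ref{brlemma} reproduces the computation of Proposition \ref{paramsfromT} (yielding $\lambda_2 = \lambda_1^{-1}$ and $r_2\equiv -pr_1$), so ${}^C\varphi$ matches ${}^C\psi_{r,\lambda}$. When $n$ represents the nontrivial Weyl element, the extra swap relates each diagonal entry to its own inverse-Frobenius-conjugate, reproducing the computation of Proposition \ref{u1params} for each of the two characters independently; here the identity $\Phi_2^2 = -\textnormal{id}$, fed through the squaring relation, is what produces the $\mu_{2,-1}$-factors, so ${}^C\varphi$ matches ${}^C\varphi_{k,\ell}$.

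The step I expect to demand the most care is the interplay between the similitude condition and the quotient by $\langle(-\textnormal{id},-1)\rangle$. Since $d[g,\mu] = \mu^2$, the equation $d\circ{}^C\varphi = \omega_1$ forces $\mu_0^2 = 1$ (so $\mu_0 = \pm 1$, absorbed into $n$) and $\mu(h)^2 = \omega_1(h)$ on $\gal_{\qpp}$, whence the $\fpb^\times$-coordinate of ${}^C\varphi$ must be a square root $\omega_1^{1/2}$ of $\omega_1$ on $\gal_{\qpp}$. Such a root exists precisely because $p$ is odd: since $\omega_1$ restricts to $\omega_2^{p+1}$ on inertia, one may take $\omega_1^{1/2} = \omega_2^{(p+1)/2}$ there, the exponent being integral (this is the same role oddness plays in Lemma \ref{section}). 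Disentangling this square root modulo the identification $[-\textnormal{id},\mu] = [\textnormal{id},-\mu]$, so that the diagonal $\textnormal{GL}_2$-entries carry exactly the $\omega_1^{1/2}$-twist while the residual powers of $\omega_2$ reduce to the exponents appearing in the two families, is the sign-and-root-of-unity bookkeeping that distinguishes this argument from its $L$-group predecessor; once it is settled, the identification of ${}^C\varphi$ with ${}^C\varphi_{k,\ell}$ or ${}^C\psi_{r,\lambda}$ is immediate.
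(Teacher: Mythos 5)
The paper offers no proof to compare against: the author explicitly leaves this proposition as an exercise, with the appendix's stated intent that all arguments be transported from the $L$-group setting. Your proposal follows exactly that intended route, and its skeleton is sound. For (a), the reduction via $[g,\mu]\longmapsto\llbracket g\rrbracket$ is legitimate: the Galois action $g\mapsto g\det(g)^{-1}$ dies in $\textnormal{PGL}_2(\fpb)$, proper parabolics of $\widehat{\widetilde{G}}$ are pulled back from Borels of the adjoint quotient, and the Tate-lifting/induction argument of Proposition \ref{nostables} then applies verbatim. For (b), the case split on the image of $n$ in the Weyl group correctly separates the $\psi$-family (torus case, relation $r_2\equiv -pr_1$) from the $\varphi$-family (antidiagonal case, each character Frobenius-conjugate to its own inverse, with $n\sigma(n)=-\textnormal{id}$ producing the $\mu_{2,-1}$), and your identification of $\omega_1^{1/2}=\omega_2^{(p+1)/2}$ on inertia as the source of the shifted exponents is right.

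Two steps are asserted rather than proved and deserve to be written out. First, the claim that the relation ${}^C\varphi(\textnormal{Fr}_p)\,{}^C\varphi_0(h)\,{}^C\varphi(\textnormal{Fr}_p)^{-1}={}^C\varphi_0(\textnormal{Fr}_ph\textnormal{Fr}_p^{-1})$ forces ${}^C\varphi(\textnormal{Fr}_p)$ to normalize $\widehat{\widetilde{T}}$ is only automatic when the two diagonal characters of ${}^C\varphi_0$ are distinct (so that the centralizer of the image is the torus). In the singular case where ${}^C\varphi_0$ is central-valued, $n$ is a priori arbitrary; there you must instead use that ${}^C\varphi(\textnormal{Fr}_p)^2={}^C\varphi_0(\textnormal{Fr}_p^2)\textnormal{Fr}_p^2$ is central, which forces $n^2\det(n)^{-1}$ to be scalar, hence $n$ semisimple (here $p$ odd is used again), hence conjugable into $N(\widehat{T})$ without disturbing ${}^C\varphi_0$. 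Second, the final matching with the exponents $-1+(1-p)k$ and $-pr-(p+1)$ appearing in the definitions of ${}^C\varphi_{k,\ell}$ and ${}^C\psi_{r,\lambda}$ is precisely where the identification $[-\textnormal{id},\mu]=[\textnormal{id},-\mu]$ must be invoked: the pointwise square root $\mu(h)$ of $\omega_1(h)$ is only determined up to a sign that can be traded against a twist of both diagonal entries by $\omega_2^{(p^2-1)/2}$, and one must check that after this trade every semisimple parameter lands on one of the two listed families. You correctly flag this as the crux but do not carry it out; with those two computations supplied, the proof is complete.
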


\begin{proof}
 This is left as an exercise.
\end{proof}

\vspace{\baselineskip}

We may now state an analog of Definition \ref{corr}.  

\begin{defi}
 We define a \emph{``semisimple mod-$p$ correspondence for $G = \textnormal{U(1,1)}(\qpp/\qp)$''} to be the following correspondence between certain equivalence classes of ${}^CG$-valued Langlands parameters over $\fpb$ and certain isomorphism classes of semisimple $L$-packets on $G$:
\begin{itemize}
\item \emph{The supercuspidal case:}
Let $0\leq k,\ell<p + 1$ with $k\neq \ell$.
$$\boxed{{}^C\varphi_{k,\ell} \longleftrightarrow \left\{\cusp{\ell}{[k - \ell - 1]},~\cusp{k}{[\ell - k - 1]}\right\}}$$
\item \emph{The nonsupercuspidal case:}
Let $0\leq r \leq p - 1$, $\lambda\in\fpb^\times$, and $0\leq k < p+1$.
$$\boxed{{}^C\psi_{(r - 1) + (1 - p)k,\lambda}\longleftrightarrow \{(\omega^k\circ\det)\otimes\bpi(r,\lambda)^{\textnormal{ss}}~\oplus~(\omega^{k + r + 1}\circ\det)\otimes\bpi(p - 1 - r,\lambda^{-1})^{\textnormal{ss}}\}}$$  
\end{itemize}
\end{defi}

\nocite{*}
\bibliographystyle{amsplain}
\bibliography{U11bib}

\end{document}